\newtheorem{theorem}{Theorem}[section]
\newtheorem{corollary}[theorem]{Corollary}
\newtheorem{lemma}[theorem]{Lemma}
\newtheorem{proposition}[theorem]{Proposition}
\newtheorem{pdefinition}[theorem]{Proposition-Definition}
\theoremstyle{definition}
\newtheorem{definition}[theorem]{Definition}
\theoremstyle{remark}
\newtheorem{remark}[theorem]{\sc Remark}
\newtheorem{example}[theorem]{\sc Example}
\renewcommand{\Box}{\square}    
\newcommand{\corank}{{\rm{corank}}}
\newcommand{\Sing}{{\mathrm{Sing\hspace{2pt}}}}
\newcommand{\rank}{{\mathrm{rank\hspace{1pt}}}}
\newcommand{\Disc}{{\mathrm{Disc\hspace{2pt}}}}
\newcommand{\im}{{\mathrm{Im\hspace{2pt}}}}
\newcommand{\ity}{{\infty}}
\renewcommand{\d}{{\mathrm{d}}}
\newcommand{\e}{\varepsilon}
\newcommand{\m}{\setminus}
\newcommand{\s}{\subset}
\newcommand{\fin}{\hspace*{\fill}$\Box$\vspace*{2mm}}
\newcommand{\bR}{{\mathbb R}}
\newcommand{\bC}{{\mathbb C}}
\newcommand{\bK}{{\mathbb K}}
\newcommand{\bN}{{\mathbb N}}
\newcommand{\bS}{{\mathbb S}}
\newcommand{\bW}{{\mathbb W}}
\newcommand{\C}{\mathbb{C}}
\begin{document}

\title[Images of analytic map germs and singular fibrations]{Images of analytic map germs, \\ and singular fibrations}

\author{\sc Cezar Joi\c{t}a}
\address{Institute of Mathematics of the Romanian Academy, P.O. Box 1-764,
 014700 Bucharest, Romania.} 
\email{Cezar.Joita@imar.ro}

\author{Mihai Tib\u{a}r}
\address{Univ. Lille, CNRS, UMR 8524 -- Laboratoire Paul Painlev\'e, F-59000 Lille,
France}  
\email{mihai-marius.tibar@univ-lille.fr}

\dedicatory{In memoriam \c Stefan Papadima}

\subjclass[2010]{14D06, 32S55, 58K05, 57R45, 14P10, 32S20, 32S60, 58K15, 57Q45, 32C40}

\keywords{map germs, fibrations}

\thanks{The authors acknowledge the support of the Labex CEMPI
(ANR-11-LABX-0007-01). The first author acknowledges the
CNCS grant PN-III-P4-ID-PCE-2016-0330.}

\begin{abstract}
For a map germ $G$ with target $(\bC^{p}, 0)$ or  $(\bR^{p}, 0)$ with $p\ge 2$, we address two phenomena which do not occur when $p=1$:  the image of $G$ may  be not well-defined as a set germ, and a local fibration near the origin may not exist.  We show how these two phenomena are related,  and how they can be characterised.
\end{abstract}

\maketitle

\section{Introduction}

We focus here on two phenomena concerning analytic map germs $(\bK^{m}, 0) \to (\bK^{p}, 0)$ with $p\ge 2$, where $\bK = \bR$ or $\bC$, which do not occur when $p=1$:

\medskip

(A).   The image of  a map germ may be not well-defined as a set germ.


(B).  A  map germ may not define a local fibration.

\bigskip

In support of  the assertion (A), one of the simplest  examples is the blow-up $F: (\bK^{2}, 0) \to (\bK^{2}, 0)$, 
$F(x,y) = (x, xy)$. The image $F(B_{\e})$  of the ball $B_{\e}$ centred at 0, viewed as a set germ at $0$,  depends heavily on the radius $\e>0$, and therefore the image of the map germ $F$ is not well-defined as a set germ. 

The germ-image condition has been singled out by Mather in \cite[\S 2.5 and \S 9]{Stratif}\footnote{We thank Maria Ruas for drawing our attention to this reference.} as a necessary condition for the construction of a certain Whitney stratification.  However, in the study of map germs initiated 
 by Thom, Milnor, Mather, Arnold etc and continued by many mathematicians until today, the usual setting is  ``map germ $G$ with isolated singularity in  its central fibre $G^{-1}(0)$'', and in this setting
 the germ-image condition turns out to be fulfilled, both over $\bC$ and over $\bR$ -- see below our  Propositions \ref{p:dim} and \ref{l:nice} which treat more general settings.  Our new study concerns the complementary case  ``nonisolated singularities in the central fibre''.

\smallskip

In support of the assertion (B), one may consider the following example taken from \cite{Sa} $F: (\bC^{3}, 0) \to (\bC^{2}, 0)$, $(x,y,z) \mapsto (x^{2}- y^{2}z,  y)$.  Then $F$ is flat, so its image is well-defined as a set germ: $(\im F ,0)  = (\bC^{2},0)$, see e.g. \cite[pag. 214]{BS}.
 Sabbah \cite{Sa} showed that this map germ, with $\Sing F =\{ x=y=0\}$ and $F(\Sing F)=\{ 0\}$, does not have a locally trivial fibration over the set germ $(\bC^{2}\m \{ 0\},0)$.

The following natural questions arise: 
\emph{How can one characterise the phenomena (A) and (B)? Are they related?}

We address here (A) and (B), in this order, for the following reason: the existence of a well-defined image  of a map germ is a necessary condition for the existence of a local fibration, as  already observed in \cite{ART}, but not sufficient, as the above example  shows.  


The recent works \cite{ACT, JT} give partial answers to the question: \emph{under what conditions the image of an analytic map germ is  well-defined as a set germ?} In \cite{JT} we have shown how intricate is the classification even in the cases of holomorphic map germs $(f,g) : (\bC^{n}, 0) \to (\bC^{2}, 0)$ and of real analytic map germs $f\bar g: (\bC^{n}, 0) \to (\bC, 0)$. 

 In the first part of this note (Sections 2 and 3) we introduce the notion of \emph{nice map germs} after \cite{ACT}, abbreviated NMG, we  recall several  key results of \cite{ACT, JT}, and we complete them by the new Propositions \ref{p:dim}, \ref{l:nice} and Theorem \ref{p:sing}. Our Proposition \ref{t:singfbarg} gives more details about the discriminant of the maps $f\bar g$, as studied in  \cite{ParTi, ParTi}.

In the second part (Sections 4 and 5) we address  the existence  of a local fibration for map germs
on the basis of the preceding results. The existence problem is actually a long standing one, stemming from Milnor's results \cite{Mi} for holomorphic function germs completed by Hironaka and by L\^e in case of nonisolated singularities,  as well as from Hamm's study \cite{Ha-icis} of the local fibration attached to a complex  isolated complete intersection singularity (abbreviated ICIS, see \cite{Lo, Lo2} for the reference monograph on this topic). 
Beyond the ICIS, there are several relatively recent existence results, especially in case of mixed map germs, for instance \cite{dST0, ACT, Ma, ART, Ha, PS, Oka4}.

The
 general notion of ``stratified fibration'' that we work with here will be defined in  \S \ref{s:nmg}  and \S \ref{s:sing}. 
We give the most general condition (the ``tameness'' condition Definition \ref{d:tame}) under which local fibrations exist in  the singular stratified setting.
Our main result is the ``universal criterion for NMG'' Theorem \ref{main-new} telling that ``tame implies NMG'', which  yields a general Fibration Theorem \ref{t:tube} and its corollaries in Section 5.  
 


\section{Map germs having germ images}\label{s:nice}

\bigskip
Let $A, A'\s \bR^{p}$ be  subsets containing the origin and let $(A,0)$ and $(A',0)$ denote their germs at 0. We recall that one has the equality of set germs $(A,0) = (A',0)$ if and only if there exists some open ball $B_{\e}\s \bR^{p}$ centred at 0 and of radius $\e>0$ such that $A\cap B_{\e} = A'\cap B_{\e}$.

\begin{definition}\label{d:nice}
Let $G:(\bR^{m},0) \rightarrow (\bR^{p}, 0)$, $m\ge p >0$, be a continuous map germ.
 We say that the image  $G(K)$  of a set $K\subset \bR^{m}$ containing $0$  is a \emph{well-defined set germ}
 at $0\in \bR^{p}$ if for any small enough open balls $B_{\e},  B_{\e'}$ centred at 0, with  $\e, \e' >0$, we have the equality of germs
 $(G(B_{\e}\cap K),0) =  (G(B_{\e'}\cap K),0)$.
 
  Whenever the images $\im G$ and  $G(\Sing G)$ are well-defined as germs,  one says\footnote{cf \cite{ART}.} that $G$ is a \emph{nice map germ}, abbreviated NMG.
\end{definition}

\begin{remark}
In support of the second part of Definition \ref{d:nice}, let us point out that even if the image $\im G$ is well-defined as a germ, the image of some restriction of $G$ might be not.  This behaviour can be seen in  the following example  $G:\bC^3\to\bC^2$, $G(x,y,z)=(x,z)$ and  $K:= \{(x,y,z) \mid z=xy\}\s \bC^3$, where the image $G(K)$ is not well-defined as a germ. This example is of course based on the first example given in the Introduction.
\end{remark}



\begin{proposition}\label{p:dim}
Let $F:(\C^n, 0)\to(\C^p,0)$,  $n\geq p$, be a holomorphic map germ. If the fibre  $F^{-1}(0)$ has  dimension $n-p$
 then $(\im F, 0) = (\C^{p}, 0)$.  If moreover $\Sing F \cap F^{-1}(0) = \{ 0\}$ then $F$ is a NMG.
 \end{proposition}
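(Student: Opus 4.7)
The plan is to treat the two assertions separately. For the first, the key is that holomorphic maps with minimal fibre dimension are open, and openness makes the image germ trivially well-defined. For the second, the hypothesis $\Sing F \cap F^{-1}(0) = \{0\}$ makes $F|_{\Sing F}$ a finite map germ, and a short compactness argument then shows that the image of the singular locus is independent of the chosen ball.

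For the first assertion I would argue as follows. Since $\dim_0 F^{-1}(0) = n-p = \dim \bC^n - \dim \bC^p$ and $\bC^n$ is Cohen--Macaulay while $\bC^p$ is regular, miracle flatness implies that $F$ is flat at $0$; as flat holomorphic maps between complex manifolds are open, $F$ is open at $0$. (Alternatively one may invoke Remmert's open mapping theorem directly, after noting that upper semicontinuity of fibre dimension together with the generic lower bound $n-p$ gives equidimensional fibres near $0$.) Consequently $F(B_{\e})$ is an open set containing $0 = F(0)$ for every small $\e>0$, so $(F(B_{\e}),0) = (\bC^{p},0)$ independently of $\e$, proving $(\im F,0) = (\bC^{p},0)$.

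For the second assertion, the hypothesis $\Sing F \cap F^{-1}(0) = \{0\}$ says that $0$ is an isolated point of the zero fibre of $F|_{\Sing F}$. Because $\Sing F$ is a closed analytic subset of $\bC^n$, Remmert's finite mapping theorem gives a neighbourhood $U$ of $0$ in $\bC^n$ and a neighbourhood $W$ of $0$ in $\bC^p$ such that $F\colon U \cap \Sing F \to W$ is proper with finite fibres, and in particular $F(U \cap \Sing F)$ is a closed analytic subset of $W$.

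It remains to verify independence of the representative. Pick $0 < \e < \e'$ with $\overline{B_{\e'}} \s U$. The set $K := \Sing F \cap \bigl(\overline{B_{\e'}} \m B_{\e}\bigr)$ is compact and, by hypothesis, disjoint from $F^{-1}(0)$, so $\delta := \min_{K} |F| > 0$. Any $z \in F(\Sing F \cap B_{\e'})$ with $|z| < \delta$ cannot come from $K$, hence already lies in $F(\Sing F \cap B_{\e})$. This yields $(F(\Sing F \cap B_{\e}),0) = (F(\Sing F \cap B_{\e'}),0)$, so $(F(\Sing F),0)$ is well-defined and $F$ is a NMG. The main obstacle is invoking the right form of the openness statement for the first part, since a priori we are given information only about the single fibre over $0$; once this is in place, the rest is a direct compactness-plus-finite-mapping argument.
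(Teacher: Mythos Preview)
Your proof is correct, but both parts take a different route from the paper.

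For the first assertion the paper slices: it picks a generic $p$-plane $Z$ through $0$ so that $0$ is isolated in $Z\cap F^{-1}(0)$, hence $F_{|Z}$ is a finite map on a small neighbourhood, and the Open Mapping Theorem for finite holomorphic maps gives openness of the image. Your argument via miracle flatness (or Remmert's open mapping theorem) is equally valid; the paper's slice-to-a-finite-map trick is perhaps lighter in that it avoids any commutative algebra, while your approach identifies the structural reason (flatness) behind the openness.

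For the second assertion the paper observes that the hypotheses make $(F^{-1}(0),0)$ an ICIS and then invokes the purity of the discriminant \cite[Theorem 4.8]{Lo} to conclude that $F(\Sing F)$ is a hypersurface germ, hence in particular a well-defined set germ. You instead argue directly: $\Sing F\cap F^{-1}(0)=\{0\}$ makes $F_{|\Sing F}$ finite near $0$, so by Remmert its image is analytic, and your compactness estimate $\delta=\min_K|F|>0$ gives germ-independence. Your argument is more elementary and self-contained; it does not need the purity theorem and in fact does not even use the first hypothesis $\dim F^{-1}(0)=n-p$ for this half. Incidentally, your compactness step is exactly the general lemma the paper proves and uses a few lines later for the real analogue (Proposition~\ref{l:nice}). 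What you lose compared to the paper's route is the extra information that $\Disc F$ is a hypersurface, but that is not needed for the NMG conclusion.
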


\begin{proof}
Let $B\subset \bC^n$ be an open neighbourhood of the origin where the holomorphic map $F$  is defined.
There exists  a closed irreducible analytic subset $Z\subset B$ of dimension $p$, for instance a general complex $p$-plane,  such that $0\in Z$ and that $0$ is  an isolated point of $Z\cap F^{-1}(0)$. Then, by e.g. \cite[Proposition, page 63]{GR}, it follows that there exist
 an open neighbourhood  $U$ of $0$ in $Z$  and   an open neighbourhood $V$ of the origin in $\bC^p$ 
such that $F(U)\subset V$ and the induced map $F_{|U}:U\to V$ is finite. 
By the Open Mapping Theorem  (cf \cite[ page 107]{GR}) this implies 
that $F(U)$ is open, which shows the equality of germs $(\im F, 0) = (\bC^{p}, 0)$.

   If moreover $\Sing F \cap F^{-1}(0) = \{ 0\}$ then $(F^{-1}(0), 0)$ is called  an \textit{isolated complete intersection singularity}, abbreviated ICIS, and has been studied in many papers, see Looijenga's book containing the major knowledge until 1984  \cite{Lo, Lo2}.  In the ICIS case the discriminant of $F$, defined as  $\Disc F := F(\Sing F)$, is a complex hypersurface germ, by the purity result \cite[Theorem 4.8]{Lo}. In particular $F$ is a  NMG.  
   \end{proof}

A real counterpart of the above result is the following:
\begin{proposition}\label{l:nice}
Let $G:(\bR^{m},0) \rightarrow (\bR^{p}, 0)$, $m\ge p >0$ be an analytic map germ. 

 If $\Sing G \cap G^{-1}(0) = \{ 0\}$,  then  $G$ is a NMG.
\end{proposition}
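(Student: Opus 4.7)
The plan is to verify the two conditions in Definition \ref{d:nice} of NMG separately—namely that both $(\im G, 0)$ and $(G(\Sing G), 0)$ are well-defined set germs—drawing on the two hypotheses in complementary ways. The first hypothesis, together with the positive-dimensional fibre, will give that the whole image germ is in fact $(\bR^p, 0)$; the second hypothesis, via a \L ojasiewicz-type estimate on $\Sing G$, will control the discriminant germ.

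First I would handle $(\im G, 0)$. Since $\dim_{0} G^{-1}(0) > 0$, I can pick a sequence $x_k \in G^{-1}(0) \m \{ 0\}$ with $x_k \to 0$. The hypothesis $\Sing G \cap G^{-1}(0) = \{0\}$ forces $x_k \notin \Sing G$ for $k$ large, so $G$ is a submersion at $x_k$ and, by the implicit function theorem, open on some neighbourhood of $x_k$. Hence the image of such a neighbourhood contains an open neighbourhood of $G(x_k) = 0$ in $\bR^p$. Given any $\e > 0$, for $k$ large this whole neighbourhood sits inside $B_\e$, so $(G(B_\e), 0) = (\bR^p, 0)$. Thus $(\im G, 0)$ is in fact equal to $(\bR^p, 0)$, and a fortiori well-defined as a germ.

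Next I would handle $(G(\Sing G), 0)$ by invoking the \L ojasiewicz inequality. The real-analytic function $\|G\|^{2}$ restricted to the real-analytic set $\Sing G$ vanishes, on some neighbourhood of $0$, only at $0$ itself. Working on a compact representative $\Sing G \cap \overline{B_{\delta}}$ of the germ $(\Sing G, 0)$, classical \L ojasiewicz then yields constants $c, \alpha > 0$ and a neighbourhood $U$ of $0$ such that
\[
 \|G(x)\| \ge c \|x\|^{\alpha} \quad \text{for every } x \in \Sing G \cap U.
\]
For any radii $0 < \e' < \e$ both small, only the inclusion $G(\Sing G \cap B_{\e}) \s G(\Sing G \cap B_{\e'})$ is nontrivial at the germ level near $0$: if $y = G(x)$ with $x \in \Sing G \cap B_{\e}$ and $\|y\|$ is sufficiently small, then $\|x\| \le (\|y\|/c)^{1/\alpha} < \e'$, so $y \in G(\Sing G \cap B_{\e'})$. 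This gives the equality of germs and completes the argument.

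I expect the main subtlety to lie in the application of \L ojasiewicz to the potentially singular analytic set $\Sing G$; this is classical, but it is important to restrict explicitly to a closed ball in order to have a compact set on which the inequality holds uniformly. Everything else—extracting the sequence $x_k$ from the hypothesis $\dim_{0} G^{-1}(0) > 0$ and translating submersivity into local openness—is routine, and the whole proof is essentially bookkeeping with ball radii around these two ingredients.
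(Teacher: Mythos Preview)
Your argument is correct. The treatment of $(\im G,0)$ is the same as the paper's: both pick a regular point of $G$ on $G^{-1}(0)\setminus\{0\}$ arbitrarily close to the origin and use that a submersion is open. Your sequence $x_k\to 0$ makes explicit why this works inside every $B_\e$, which the paper's Lemma~\ref{l:im} leaves implicit.

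For the discriminant germ the two proofs diverge. You invoke the \L ojasiewicz inequality for $\|G\|^{2}$ on $\Sing G$ to get a quantitative bound $\|x\|\le (\|G(x)\|/c)^{1/\alpha}$, forcing every preimage of a small value to lie already in the small ball $B_{\e'}$. The paper instead proves a purely topological lemma: for any continuous $f:X\to Y$ between convenient spaces and any closed $S\subset X$ with $S\cap f^{-1}(b)=\{a\}$, the image $f(S)$ is a well-defined germ at $b$, shown by a sequential compactness argument. The paper's route is more elementary (no \L ojasiewicz) and strictly more general --- it needs neither analyticity nor the linear structure of $\bR^m$ --- whereas your route gives an explicit modulus of continuity for the ``inverse'' on $\Sing G$, which is a pleasant bonus but not needed here.
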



\begin{proof} 
Let us assume first that  $\dim G^{-1}(0) >0$. Then one has:
\begin{lemma}\cite{ART}\label{l:im}
 If $\Sing G \cap G^{-1}(0) \subsetneq  G^{-1}(0)$ then  $(\im G,0)= (\bR^{p},0)$.
\end{lemma}
\begin{proof}
Let $q\in  G^{-1}(0) \m \Sing G$, which is nonempty by hypothesis.  Then $G$ is a submersion on some small open
neighbourhood $N_{q}$ of $q$,  thus the restriction $G_{| N_{q}}$ is an open map, and therefore $\im G$ contains some open neighbourhood of the origin of the target.
\end{proof}
In order to show that the image of $\Sing G$ is a well-defined germ, and thus to complete the proof 
that $G$ is a NMG, we use the following lemma:
\begin{lemma}\label{l:haus}
Let $X$ and $Y$  be convenient topological spaces (i.e. Hausdorff, locally compact and 
with countable system of neighbourhoods at each point). Let $f:X\to Y$ be a continuous map, let
$b\in Y$ and let $S$ be a closed subset of $X$. If $S\cap f^{-1}(b)=\{a\}$ then $\im(f_{|S})$ is well-defined as a set germ at $b$.
\end{lemma}
\begin{proof} 
By contradiction, if it is not the case then there exist  two relatively compact 
open neighbourhoods of $a$, $V \supset V'$,  and a sequence of points 
$p_{n}\in Y$, $p_{n} \to b$, $p_{n}\in f(V\cap S)$, $p_{n}\not\in f(V'\cap S)$ for all 
integers $n\gg 1$. Let  then $x_{n}\in V\cap S$ with $f(x_{n}) = p_{n}$. There is a subsequence
$(x_{n_{k}})_{k\in \bN}$ which tends to some point $x$ in the closure $\overline V\cap S$.
We have $f(x) = \lim_{k\to \infty} f(x_{n_k}) = \lim_{k\to \infty}p_{n_{k}}= b$. 
Since $S \cap f^{-1}(b) = a$, the point $x$ must coincide with $a$.  But then one must have 
$x_{n_{k}}\in V'\cap S$ for some $k\gg 1$, which implies that $p_{n_{k}}\in f(V'\cap S)$ which is a contradiction to the assumptions about the sequence $(p_{n})_{n}$.
\end{proof}

Let us now assume $\dim G^{-1}(0) =0$. It  is then sufficient to apply Lemma \ref{l:haus} and we get that both $\im G$ and
$G(\Sing G)$ are well-defined as germs at $0$ in the target.
\end{proof}

\section{The case of map germs $(f,g)$ and $f\bar g$}\label{s:(f,g)}

Analytic map germs  $F:(\bC^n, 0)\to (\bC^p, 0)$ for which the dimension of the fibre $F^{-1}(0)$ is greater than $n-p$ are beyond the framework of Proposition \ref{p:dim}. We call them \emph{blow-ups}. 

\subsection{The map germ $(f,g)$}
In \cite{JT} we have given a classification in the case $p=2$, as follows. 

 \begin{theorem}\label{t:main1}\cite{JT}
Let $(f,g):(\bC^n,0)\to(\bC^2,0)$ be a non-constant holomorphic map germ. Then:
 \begin{enumerate}
\rm \item \it  If $\dim Z(f)\cap Z(g)=n-2$, then $\im (f,g)$ is a well-defined set germ, more precisely $(\im (f,g), 0) = (\bC^{2}, 0)$.
\rm \item \it  If $\dim Z(f)\cap Z(g)=n-1$ then:
 \begin{enumerate}
 \rm \item \it   in case $Z(g)\subset Z(f)$ or  $Z(f)\subset Z(g)$, the image $\im (f,g)$ is  a well-defined set germ  if and only if $\im (f,g)$ is an irreducible plane curve $(C,0)$.  
\rm \item \it   in case   $Z(f)\not\subset Z(g)$ and $Z(g)\not\subset Z(f)$, the image 
$\im (f,g)$ is a well-defined set germ if and only if $(\im (f,g), 0)=(\bC^{2}, 0)$.
 \end{enumerate}
 \end{enumerate}
 \fin
\end{theorem}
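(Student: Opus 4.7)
The plan is to split by cases. Case (a) is immediate from Proposition~\ref{p:dim}, since the hypothesis $\dim Z(f)\cap Z(g)=n-2$ is exactly the codimension assumption $\dim F^{-1}(0)=n-p$ with $p=2$, giving $(\im(f,g),0)=(\bC^2,0)$.

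For case (b), the fibre $(f,g)^{-1}(0)$ has abnormally high dimension $n-1$, so $Z(f)$ and $Z(g)$ share at least one hypersurface component through the origin. The strategy is to split further according to the generic rank of $d(f,g)$. If the generic rank equals $1$, then $f$ and $g$ are algebraically dependent, and one factorises $f=\phi\circ h$, $g=\psi\circ h$ for some holomorphic germ $h:(\bC^n,0)\to(\bC,0)$ and holomorphic functions $\phi,\psi$ of one variable; by the Open Mapping Theorem applied to $h$, the image is an irreducible plane curve germ $(C,0)$, automatically a well-defined germ. If the generic rank equals $2$, the image is a $2$-dimensional subanalytic subset of $\bC^2$, and the key technical input to be established is: \emph{a $2$-dimensional image germ of $(f,g)$ is well-defined if and only if it equals $(\bC^2,0)$}. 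One proves this by a curve-selection argument producing points $p_n\to 0$ in the complement of $\im(f,g)$ whose preimages under $(f,g)$ do not accumulate at $0$, so that $p_n\in(f,g)(B_{\e_n})\setminus(f,g)(B_{\e_n'})$ for suitable $\e_n'<\e_n$, showing instability of the image germ.

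In subcase (b)(i), say $Z(g)\subset Z(f)$: by the Nullstellensatz, $f^k=g\cdot h$ for some $k\geq 1$ and a holomorphic $h$, so on small balls the ratio $|u|^k/|v|$ remains bounded away from the critical locus; values $(u,v)$ with $|u|^k/|v|$ large are therefore missed, so when the generic rank equals $2$ we have $(\im(f,g),0)\neq(\bC^2,0)$, and by the key input the image germ is not well-defined. Combined with the generic rank $1$ analysis, this yields (b)(i)(a). In subcase (b)(ii), choose irreducible components $V\subset Z(f)$ and $W\subset Z(g)$ through $0$ with $V\not\subset Z(g)$ and $W\not\subset Z(f)$. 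If the generic rank were $1$, the image would lie in an irreducible plane curve $C$; then $(f,g)(V)\subset C\cap\{u=0\}$ would be finite, and the connectedness of $V$, together with $f|_V\equiv 0$ and $g(0)=0$, would force $g|_V\equiv 0$, contradicting $V\not\subset Z(g)$. The reducible case is ruled out by the irreducibility of $\bC^n$. Hence the generic rank is $2$, and the key input gives the equivalence in (b)(ii).

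The main obstacle is the key technical input on $2$-dimensional image germs. The cleanest route is a curve-selection argument exploiting that non-critical values of $(f,g)$ have isolated preimages which cannot accumulate at the origin of $\bC^n$ (by a standard continuity argument); the machinery developed in \cite{ACT,JT} should supply the required missing-value sequences and handle the subtler behaviour near the critical set.
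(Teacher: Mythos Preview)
The paper does not actually prove Theorem~\ref{t:main1}: it is quoted from \cite{JT} and closed with a box. So there is no in-paper argument to compare against, and your proposal has to stand on its own. Your overall architecture is sound: case~(a) follows from Proposition~\ref{p:dim}; in case~(b) the split by the generic rank of $d(f,g)$ is the right organising principle; the rank-$1$ factorisation $f=\phi\circ h$, $g=\psi\circ h$ is essentially correct (though it deserves justification, e.g.\ via normalisation of the irreducible image curve); the argument in (b)(ii) that the generic rank cannot be $1$ is clean; and the Nullstellensatz bound $|u|^k\le C_\e|v|$ in (b)(i) correctly shows the image is not $(\bC^2,0)$.

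The genuine gap is exactly where you flag it: the ``key technical input'' that a generic-rank-$2$ image is a well-defined germ if and only if it equals $(\bC^2,0)$. You defer it to \cite{JT}, which is honest, but the sketch you give is flawed. The claim that ``non-critical values of $(f,g)$ have isolated preimages'' is false for $n>2$, since regular fibres have dimension $n-2>0$. And your description of points $p_n$ in the \emph{complement} of $\im(f,g)$ that nevertheless lie in some $(f,g)(B_{\e_n})$ is self-contradictory: points not in the image have empty preimage and cannot belong to any $(f,g)(B_{\e})$. A workable route would instead track, for each value, the point on its fibre closest to the origin (the set $N_h$ of Lemma~\ref{l:N} is in this spirit), or analyse directly how the subanalytic boundary of $(f,g)(B_\e)$ varies with~$\e$; neither is what you wrote. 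So the skeleton is right, but the rank-$2$ core of~(b) is not proved here and your outline for it does not work as stated.
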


 The following result together with Theorem \ref{t:main1} provide 
 the solution to the NMG problem in case of  map germs $(f,g)$.
 
\begin{theorem}\label{p:sing}
 For any map germ $F= (f,g) :(\bC^n, 0)\to (\bC^2, 0)$, $n\ge 2$,  
 the image of the singular locus   $(f,g)(\Sing (f,g))$ is  well-defined as a complex analytic set germ.
\end{theorem}

 \begin{remark}
 The above result tells in particular  that $F(\Sing F)$ is a set germ even if  $\im F$ may be not well-defined as a set germ at 0.
 More precisely, if it is non-empty, then the set $(f,g)(\Sing (f,g))$ is either the origin $0\in \bC^{2}$, or  a complex analytic (not necessarily irreducible) curve.
 
However this  result does not hold for map germs with $p\geq 3$. \\
Example:
$F(x,y,z)=(xy,y,z^2)$, with $\Sing F=\{y=0\}\cup \{z=0\}$. The image $F(\{z=0\})$ is not a set germ at $0$, and therefore $F(\Sing F)$ is not an analytic set germ at 0.
\end{remark}

\subsection{Proof of Theorem \ref{p:sing}}
  Let $\Sing(f,g) = \cup_{j=1}^{k}S_{j}$ be the decomposition into positive dimensional irreducible components of the analytic germ $\Sing(f,g)$.  
We fix some component $S:= S_{j}$ and prove that the image $(f,g)(S)$  is well-defined as an analytic set germ at 0.

If $(f,g)_{|S}\equiv 0$, then our claim is trivially true, so let us assume $(f,g)_{|S}\not\equiv 0$. 
Then  $\dim_x(f_{|S},g_{|S})^{-1}(f(x),g(x))$ is either $\dim S -2$, or $\dim S -1$, for any $x\in S$. Indeed, the dimension of the fibre cannot be smaller than $\dim S -2$; if it is greater than $\dim S -1$,  i.e. equal to $\dim S$, then the fibre must be equal to the irreducible component $S$, which amounts to $(f,g)_{|S}\equiv 0$, and this contradicts the hypothesis.

As for the ranks, we have the inequality  $\rank_{x}(f_{|S},g_{|S})\leq  \rank_{x}(f,g)$ for any $x\in S\m \Sing(S)$, and we have $\rank_{x}(f,g)\leq 1$ by the definition of the singular locus. 
Since $\rank(f_{|S},g_{|S})\equiv 0$ implies $(f,g)_{|S}\equiv 0$,  
we only have to deal with the case   $\rank(f_{|S},g_{|S})\not\equiv 0$. 

Then 
$S_{0}:=\{x\in S\m \Sing(S)  \mid \rank_x(f_{|S},g_{|S})>0\}$  is an open, connected and dense analytic subset of $S$, and actually
 $\rank_x(f_{|S},g_{|S})=1$, $\forall x\in S_{0}$. By the rank theorem, 
we deduce that
$\dim_x(f_{|S},g_{|S})^{-1}(f(x),g(x))=\dim S -1$ for all $x\in S_{0}$.  The next result on the semi-continuity of the dimension of fibres is useful in order to figure out what happens at points in $S\m S_{0}$.

\begin{lemma}\cite[page 66]{Na}
Let $F:X\to Y$ be a holomorphic map between complex spaces. Then every $a\in X$ has 
a neighbourhood $U\subset X$ such that 
\[ \dim_x F^{-1}(F(x))\leq \dim_a F^{-1}(F(a))\]
  for any $x\in U$.
  \fin
\end{lemma}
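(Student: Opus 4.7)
The strategy is to reduce the semi-continuity of fibre dimension to a local finiteness statement via Remmert's finite mapping theorem, after slicing the source transversally to the fibre through $a$. Set $d:=\dim_a F^{-1}(F(a))$. By the very definition of local dimension at $a$, I can pick $d$ holomorphic functions $h_1,\ldots,h_d$ on an open neighbourhood $W\subset X$ of $a$, with $h_i(a)=0$, such that $a$ is an isolated point of the analytic germ $F^{-1}(F(a))\cap\{h_1=\cdots=h_d=0\}$ at $a$. Concretely, one embeds $X$ locally into some $\bC^N$ and takes for the $h_i$ the restrictions of $d$ sufficiently generic affine-linear forms vanishing at $a$.

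Next I form the auxiliary holomorphic map $G:=(F,h_1,\ldots,h_d):W\to Y\times\bC^d$. By construction, $a$ is an isolated point of $G^{-1}(G(a))$. Remmert's finite mapping theorem then produces an open neighbourhood $U\subset W$ of $a$ and an open neighbourhood $V\subset Y\times\bC^d$ of $G(a)$ such that $G_{|U}\colon U\to V$ is a finite holomorphic map, that is, proper with finite fibres. In particular, for every $x\in U$ the point $x$ is isolated in the set $G^{-1}(G(x))\cap U = F^{-1}(F(x))\cap\{h_1=h_1(x),\ldots,h_d=h_d(x)\}\cap U$.

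To conclude, I invoke the elementary hypersurface-slicing inequality (a geometric form of Krull's Hauptidealsatz in the analytic setting): for any analytic subset $Z\subset W$ and any holomorphic function $h$, one has $\dim_x(Z\cap\{h=h(x)\})\geq \dim_x Z-1$ at each $x\in Z$. Iterating this $d$ times with $Z=F^{-1}(F(x))$ and the successive cuts $h_i=h_i(x)$, one sees that the inequality $\dim_x F^{-1}(F(x))\geq d+1$ would force $\dim_x G^{-1}(G(x))\geq 1$, contradicting the fact that $x$ is an isolated point of this intersection. Hence $\dim_x F^{-1}(F(x))\leq d=\dim_a F^{-1}(F(a))$ for every $x\in U$, as required.

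The main obstacle is the first step, which hides the essential analytic-geometric input: producing functions $h_i$ that genuinely cut $F^{-1}(F(a))$ down to an isolated point at $a$. For a germ embedded in $\bC^N$ this follows from Noether normalisation for analytic local algebras (or, geometrically, from a generic choice of affine slicing coordinates through $a$), but some care is needed when $a$ is a singular point of $X$ or when $X$ is not reduced: one has to extend the $h_i$ from the local embedding to $X$ and verify that the resulting dimension bound controls every irreducible component of the fibre meeting $a$.
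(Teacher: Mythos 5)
The paper does not prove this lemma at all: it is quoted verbatim from Narasimhan \cite[page 66]{Na} and used as a black box, so there is no in-paper argument to compare against. Your proof is correct and is essentially the standard one found in the cited sources: set $d=\dim_a F^{-1}(F(a))$, cut the fibre through $a$ down to an isolated point by $d$ generic holomorphic functions $h_1,\dots,h_d$ (generic affine-linear forms in a local embedding into $\bC^N$), apply the local finiteness theorem to $G=(F,h_1,\dots,h_d)$ --- this is the same result the paper itself invokes in Proposition 2.3 via \cite[Proposition, page 63]{GR}, so there is no circularity --- and then use the analytic Hauptidealsatz (each hypersurface section drops the local dimension by at most one) to conclude that $\dim_x F^{-1}(F(x))\le d$ throughout the resulting neighbourhood. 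The caveat you raise in the last paragraph is real but resolves routinely: the local dimension of a complex space is by definition that of its reduction, and the fibres of $F$ and of $F_{\mathrm{red}}$ coincide as sets, so both the generic slicing at a possibly singular point $a$ and the slicing inequality go through unchanged. The argument is complete.
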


Indeed, since $S_{0}$ is dense in $S$,  the above lemma yields in our case that  for any $x\in S$ one has at least the inequality $\dim_x(f_{|S},g_{|S})^{-1}(f(x),g(x))\geq \dim S -1$.  However, this inequality cannot be strict (i.e. not even at points in $S\m S_{0}$) since the converse inequality  $\dim_x(f_{|S},g_{|S})^{-1}(f(x),g(x))\leq \dim S -1$ necessarily holds, as we have shown in the first part of the above  proof. 

And now, since  $\dim_x(f_{|S},g_{|S})^{-1}(f(x),g(x))=\dim S -1$
for all $x\in S$,  
our claim follows from the next result applied to the point $0\in S$, result which is also called  ``Remmert's Rank Theorem''  by \L ojasiewicz in his book \cite[Theorem 1, pag 295]{Loj2}:

\begin{lemma}\label{p:nara}\cite[Prop. 3, Ch. VII]{Na}
Let  $f:X\to Y$ be a  holomorphic map between complex spaces such that $X$ is  purely dimensional. If $\dim_x f^{-1}(f(x)) =r$ is independent of $x\in X$, then any point $a\in X$
has a fundamental system of neighbourhoods $U$ such that the image $f(U)$ is analytic at $f(a)$, of dimension $\dim X - r$.
\fin
\end{lemma}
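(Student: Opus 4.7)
The plan is to factor $F$ locally through an analytic section of complementary dimension to the fibres and then invoke the finite case of Remmert's proper mapping theorem. Set $n := \dim X$, $d := \dim_x F^{-1}(F(x))$ (constant by hypothesis), and $r := n - d$. Since $(X,a)$ has finitely many irreducible components, each of pure dimension $n$, and the image of a union is the union of images, I may first reduce to the case where $(X,a)$ is irreducible.

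Next, I locally embed $(X,a)$ into some $(\bC^N,0)$ and choose a linear subspace $L\subset \bC^N$ through $a$ of codimension $d$ in sufficiently general position, so that $(Z,a):=(X\cap L, a)$ has pure dimension $r$ and $\{a\}$ is isolated in $Z\cap F^{-1}(b)$, where $b:=F(a)$. Such an $L$ exists because $\dim_a F^{-1}(b)=d$, by standard dimension theory for the intersection of analytic sets in $\bC^N$. On a small compact neighbourhood $\overline W$ of $a$ in $Z$, the restriction $F|_{\overline W}$ has the single point $\{a\}$ as its fibre over $b$, hence $F|_Z$ is proper with finite fibres, i.e.\ finite, near $a$. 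By the finite case of Remmert's proper mapping theorem, $A:=F(W)$ is an analytic germ at $b$; being the image of a finite surjection from the pure $r$-dimensional $Z$, it has pure dimension $r$.

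It remains to verify that, for every sufficiently small open neighbourhood $U$ of $a$ in $X$ with $U\supseteq W$, the germs $(F(U),b)$ and $(A,b)$ coincide. The inclusion $(A,b)\subseteq (F(U),b)$ is immediate. For the reverse, given $y\in F(U)$ close to $b$, the fibre $F^{-1}(y)\cap U$ has dimension $d$ by hypothesis, and at any intersection point $x'$ the local intersection-dimension inequality $\dim_{x'}\bigl(F^{-1}(y)\cap Z\bigr)\geq d+r-n=0$ would force $y\in A$ once the non-emptiness of $F^{-1}(y)\cap W$ is secured. The family of fundamental neighbourhoods $U$ can then be taken to be any system of neighbourhoods of $a$ in $X$ containing $W$ and contained in the preimage of a fixed small neighbourhood of $b$ in $Y$ on which $A$ is realised as an analytic set.

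The principal obstacle is precisely the non-emptiness of $F^{-1}(y)\cap W$ for $y\in F(U)$ near $b$: in the manifold case it is immediate from Ehresmann-type transversality, but in the singular complex-analytic category it requires two extra ingredients. First, Remmert's open mapping theorem applied to the finite surjection $F|_Z\colon Z\to A$ between pure $r$-dimensional irreducible spaces yields that $F(W)$ already contains a full neighbourhood of $b$ in $A$; second, an upper semicontinuity argument on fibre dimensions guarantees that, as $y$ deforms away from $b$, the fibres $F^{-1}(y)\cap U$ cannot escape the compact set $\overline W$ while maintaining the prescribed dimension $d$ throughout, which would contradict the constant-fibre-dimension hypothesis on $U$. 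Making this last point rigorous — propagating the transverse intersection at $a$ to nearby fibres in the absence of any smoothness on $X$ — is the technical crux of the argument.
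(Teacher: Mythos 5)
The paper does not actually prove this lemma: it is quoted from \cite[Ch.~VII]{Na} and used as a black box, so your attempt is measured against the classical proof rather than against anything in the text. Your skeleton --- reduce to $(X,a)$ irreducible, slice $X$ by a generic linear space $L$ of codimension $d$ so that $Z=X\cap L$ is pure $r$-dimensional with $a$ isolated in $Z\cap F^{-1}(b)$, and apply the finite mapping theorem to obtain an analytic germ $A=F(W)$ of pure dimension $r$ --- is indeed the standard route. Two small omissions along the way: the reduction to an irreducible component needs a word on why the component inherits the constant fibre dimension $d$ (it does, by upper semicontinuity of $x\mapsto\dim_xF^{-1}(F(x))$ combined with the fact that the restricted fibre coincides with the full fibre on the dense open set where only that component is present), and a \emph{fundamental system} of neighbourhoods requires arbitrarily small $U$, so the construction must be rerun with $W$ replaced by $W\cap U$, not merely with $U\supseteq W$.

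The genuine gap is the one you flag yourself: the inclusion $(F(U),b)\subseteq(A,b)$, i.e.\ that every fibre $F^{-1}(y)$, $y\in F(U)$ near $b$, actually meets $W$. Neither of your two tools establishes this. The intersection inequality $\dim_{x'}\bigl(F^{-1}(y)\cap Z\bigr)\ge d+r-n=0$ is vacuous here: it presupposes a common point $x'$ of $F^{-1}(y)$ and $Z$ --- which is precisely what must be produced --- and even granting one it only says the intersection is at least $0$-dimensional. Upper semicontinuity of fibre dimension likewise does not prevent a nearby $d$-dimensional fibre from lying entirely off the codimension-$d$ slice $L$ while staying inside $\overline{W}$'s neighbourhood; complementary dimensions never force two analytic sets to meet without a properness or degree argument. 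The standard way to close the gap is of a different nature: set $E:=F^{-1}(A)\cap U$, an analytic subset of $U$. For every smaller neighbourhood $U_1\ni a$ one still has $F(E\cap U_1)\supseteq F(W\cap U_1)\supseteq A\cap V_1$ by the finite mapping theorem applied to $W\cap U_1$, and all fibres of $F|_{E\cap U_1}$ over points of $A$ have dimension $d$; the fibre--dimension lower bound then gives $\dim(E\cap U_1)\ge r+d=n$, hence $\dim_aE=n$. Since $X$ is pure $n$-dimensional and irreducible at $a$, the germ $(E,a)$ must contain a whole neighbourhood of $a$ in $X$, which yields $F(U')\subseteq A$ for $U'$ small and completes the proof. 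Without this (or an equivalent) argument your proof is incomplete at its essential point.
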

This shows that the image $(f,g)(S\cap B)$ is an analytic set at $0\in \bC^{2}$, for any small enough ball $B$ centred at $0\in \bC^{n}$. Therefore this image is well-defined  as an analytic set germ at $0$, namely  it is either the point $0$, or an irreducible plane curve germ (as it cannot be the whole target space, by Sard theorem). This ends our proof of Theorem \ref{p:sing}.


\bigskip
\subsection{The map germ $f\bar g$}
To holomorphic function germs $f, g: (\bC^n,0)\to (\bC, 0)$  one associates the particular real map germ $f\bar g: (\bC^n,0)\to (\bC, 0)$, in particular its singular locus is well-defined as the singular set of this real map germs, see e.g. \cite{CT}. In  \cite{ParTi} one studied  $f\bar g$ in relation to  the holomorphic map germ $(f,g): (\bC^n,0)\to (\bC^{2}, 0)$. 
The following result is a corrected version of the statement in \cite{ParTi}.

\begin{proposition}\label{t:singfbarg}\rm \cite[Theorem 2.3 and Lemma 2.5]{ParTi} \it
Let $f,g : (\bC^n,0)\to (\bC, 0)$, $n>1$,  be some non-constant holomorphic function germs. 
  Then $f\bar g (\Sing f\bar g)$ is a well-defined semi-analytic set germ of dimension $\le 1$. 
  Its dimension is precisely 1 if and only if the germ $(f,g)(\Sing (f,g))$ contains at least one irreducible component tangent to some line different from the axes. 
\end{proposition}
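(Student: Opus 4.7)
The plan is to exploit the factorization $f\bar g = m\circ (f,g)$, where $m\colon\bC^2\to\bC$ is the real-analytic map $m(a,b) = a\bar b$, together with Proposition~\ref{p:sing} which guarantees that $\Disc(f,g)$ is a well-defined complex analytic set germ of dimension at most one in $\bC^2$. The first step is to describe $\Sing(f\bar g)$ in terms of $(f,g)$. Writing the real differential as $d(f\bar g) = \sum_j A_j\,dz_j + \sum_j B_j\,d\bar z_j$ with $A_j = \bar g\,\partial_j f$ and $B_j = f\,\overline{\partial_j g}$, its real rank drops below two exactly when there exists a unit complex number $\mu$ with $\bar B_j = \mu A_j$ for every $j$. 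Off $\{fg=0\}$ this forces the complex Jacobian of $(f,g)$ to have rank one at $x$ and requires the modulus equality $|f(x)|\,|\partial_j g(x)| = |g(x)|\,|\partial_j f(x)|$; in particular
\[
\Sing(f\bar g)\ \subseteq\ \Sing(f,g)\ \cup\ \{fg=0\},
\]
and the image under $f\bar g$ of the part lying in $\{fg=0\}$ is just $\{0\}$. Moreover, any positive-dimensional irreducible component $C$ of $\Sing f \cap \Sing g$ through the origin has $df|_C \equiv dg|_C \equiv 0$, forcing $f|_C \equiv g|_C \equiv 0$; hence rank-zero points of $(f,g)$ near $0$ also lie in $\{fg=0\}$ and need not be considered further.

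The remaining image sits inside $m(\Disc(f,g))$. Decomposing $\Disc(f,g) = \bigcup_i D_i$ into irreducible components, I pick for each a Puiseux parametrisation $\gamma_i(t) = (\phi_i(t), \psi_i(t))$ of orders $p_i = \ord_0 \phi_i$, $q_i = \ord_0 \psi_i$ and leading coefficients $\alpha_i, \beta_i$. The key computation is
\[
m\circ\gamma_i(t)\ =\ \phi_i(t)\,\overline{\psi_i(t)}\ =\ \alpha_i\bar\beta_i\, t^{p_i}\bar t^{q_i}\bigl(1+O(|t|)\bigr).
\]
When $p_i = q_i$, i.e.\ $D_i$ is tangent at $0$ to the non-axis line $\bC\cdot(\alpha_i,\beta_i)$, the factor $t^{p_i}\bar t^{q_i} = |t|^{2p_i}$ is nonnegative real and $m(D_i)$ is a real analytic arc through $0$ in the direction $\alpha_i\bar\beta_i$, contributing one real dimension. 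When $p_i\neq q_i$ or $D_i$ lies in a coordinate axis, I lift $\gamma_i$ to a Puiseux branch $s\mapsto x(s)$ of $\Sing(f,g)$ above $D_i$ with $(f,g)(x(s)) = \gamma_i(t(s))$ and $t(s)\sim c\,s^k$; differentiation along this branch yields
\[
\frac{|f(x(s))|}{|g(x(s))|}\ \sim\ \Big|\frac{\alpha_i}{\beta_i}\Big|\,|s|^{k(p_i-q_i)},\qquad \frac{|\partial_j f(x(s))|}{|\partial_j g(x(s))|}\ \sim\ \Big|\frac{\alpha_i p_i}{\beta_i q_i}\Big|\,|s|^{k(p_i-q_i)}.
\]
The modulus equality then forces $|p_i/q_i| = 1$, which is impossible for $p_i\neq q_i$; so the preimage of $D_i\setminus\{0\}$ inside $\Sing(f\bar g)$ is empty, and $D_i$ contributes only $\{0\}$ to the image.

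Combining these observations yields the explicit description $f\bar g(\Sing f\bar g) = \{0\}\cup\bigcup_{i\colon\, p_i = q_i} m(D_i)$, a finite union of real analytic arcs through the origin, hence a well-defined semi-analytic set germ at $0\in\bC$ of dimension at most one; its dimension equals one precisely when at least one component $D_i$ satisfies $p_i = q_i$, equivalently is tangent to a line distinct from the coordinate axes, which is the claimed equivalence. Well-definedness as a germ then follows from the properness of $f\bar g$ on a sufficiently small closed ball, in the spirit of the argument used in the proof of Proposition~\ref{l:nice}. The main obstacle I foresee is the Puiseux-order comparison above: one must choose a Puiseux branch inside $\Sing(f,g)$ parametrising $D_i$, differentiate carefully along it to extract the leading asymptotic of $|\partial_j f/\partial_j g|$, and handle the case where several branches of $\Sing(f,g)$ project to the same $D_i$ with different ramification orders (and the possible singular points on those branches).
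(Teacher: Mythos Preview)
Your overall strategy---factoring $f\bar g$ through $(f,g)$ and using Proposition~\ref{p:sing}---matches the paper, and the inclusion $\Sing f\bar g \subset \Sing(f,g)\cup\{fg=0\}$ is correct. However, the case $p_i=q_i$ contains a genuine error that breaks the proof.

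You assert that when $p_i=q_i$ the image $m(D_i)$ is a real analytic arc. This is false: the leading term $\alpha_i\bar\beta_i|t|^{2p_i}$ is indeed a half-ray, but the $O(|t|)$ correction depends on $\arg t$, so the image of the full complex disk is typically a two-dimensional region. For instance, if $D_i$ is parametrised by $t\mapsto(t,\,t+t^2)$ then $m\circ\gamma_i(t)=|t|^2(1+\bar t)$, which sweeps out an open set. Hence your formula $f\bar g(\Sing f\bar g)=\{0\}\cup\bigcup_{p_i=q_i}m(D_i)$ gives a set of real dimension~2, not~1, and is in any case only an upper bound for the true critical image. The point you are missing is that over $D_i\setminus\{0\}$ one must still isolate which points actually come from $\Sing f\bar g$; equivalently, one must find the \emph{critical locus of $u\bar v$ restricted to $D_i$}, not all of $D_i$. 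The paper does exactly this: it writes the critical-point equation $v\,\overline{du}=\lambda\,u\,\overline{dv}$ on the Puiseux parametrisation of $D_i$, takes moduli to obtain $|v\,du|=|u\,dv|$ and hence $p_i=q_i$, and then, in the multi-term case, shows the solution set consists of finitely many real semi-analytic arcs in the $t$-disk whose images under $u\bar v$ are semi-analytic arcs.

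Your treatment of the case $p_i\neq q_i$ is also shaky. The asymptotic $|\partial_j f(x(s))|/|\partial_j g(x(s))|\sim |\alpha_i p_i/\beta_i q_i|\cdot|s|^{k(p_i-q_i)}$ is not justified: the chain rule relates $\frac{d}{ds}\phi_i(t(s))$ to a \emph{linear combination} $\sum_j\partial_j f(x(s))\,x_j'(s)$, and there is no reason the individual partials $\partial_j f/\partial_j g$ should all share the same leading exponent. The paper sidesteps this entirely by never lifting back to $\bC^n$: it works with the restriction of $u\bar v$ to the plane curve $D_i$ itself, where the modulus equation $|v\,du|=|u\,dv|$ immediately forces $p_i=q_i$.
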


\begin{proof}[Sketch of the proof]
The map $f\bar g$ decomposes as $\bC^n \stackrel{(f,g)}{\rightarrow} \bC^2 \stackrel{u\bar v}{\rightarrow} \bC$.  
It is not difficult to see (cf \cite{ParTi})  that $\Sing f\bar g \subset \Sing (f,g) \cup \{fg = 0\}$, thus it remains to study 
the singular loci of the restrictions $(u\bar v)_{|C}$, for all irreducible complex curve germ components $C\subset (f,g)(\Sing (f,g))$, and we have shown in Theorem \ref {p:sing} that $(f,g)(\Sing (f,g))$ is a well-defined complex analytic set germ of dimension $\le 1$.

 One shows that the restriction $(u\bar v)_{|C \m \{ 0\}}$ is a submersion if and only if  the curve germ $C$ is tangent to one of the coordinate axes without coinciding with it.
To do that one considers a Puiseux expansion $u=t^p$, $v= a_1 t^q + \mbox{h.o.t.}$ of  $C$, with $a_1\not= 0$. 
The equality  which defines the singular locus of the function $u(t)\bar v(t)$ is:
\begin{equation}\label{eq:singbar}
  v \overline{\d u} = \lambda u \overline{\d v},
\end{equation}
 where $| \lambda | = 1$, and note that $\lambda$ depends on $t$ (see e.g. \cite{Oka0, CT}). 
Taking the modulus on both sides gives $|v \d u|= |u\d v|$, and thus $| p a_1 t ^{p+q-1} + \mbox{h.o.t.}|
= | q a_1 t ^{p+q-1} + \mbox{h.o.t.}|$. One gets the equality
$p=q$, which implies that,  if $C$ is tangent to one of the coordinate axes without coinciding with it,  then the restriction $(u\bar v)_{|C \m \{ 0\}}$ is a submersion.

Reciprocally, if $p=q$ we first look at the case of  a one-term expansion $v= a_1 t^p$. Then all points $t$ are solutions of  equation \eqref{eq:singbar}, and  consequently the corresponding critical value set contributes with a real  half-line in $f\bar g (\Sing f\bar g)$, as $C$ is a line different from the coordinate axes by the assumption $a_1\not= 0$. 

If  the Puiseux expansion $v= a_1 t^p + a_2 t^{p+j} + \mbox{h.o.t.}$ has at least 2 terms, where $j\ge 1$, then the equivalent equation  $| v \d u | =| u \d v |$ reduces, after dividing out by $p| a_{1}| |t|^{2p-1}$, to an equation of the form
$| 1 + bt^{j}+ \mbox{h.o.t.}| =| 1 + ct^{j}+ \mbox{h.o.t.}|$, with $b,c\not= 0$ and $b\not= c$.  This has as solutions a positive  number of semi-analytic arcs $\gamma(s)$  parametrised by $s\in [0, \e[$, for some small enough $\e>0$. The reason is that, for any fixed modulus $|t|$, one has $2j$ points of intersection of the circle $S^{1}$ of radius 1 with the closed curve which is a slightly perturbed small circle centred at $(1,0)$,  covered $j$-times due the variation of the argument of the variable $t$ within one period. The image  $(u\bar v)(\gamma)$ of such an arc is included in $f\bar g (\Sing f\bar g)$.  Since by hypothesis neither $u$ nor $v$ are constant along this arc,  it follows that this image is  not reduced to the point $0$, hence it must be a non-trivial continuous real arc. This arc is in fact semi-analytic, by \L ojasiewicz' result saying that the image by an analytic map of a real semi-analytic arc is a semi-analytic arc (see e.g. \cite[\S 8]{Su}).
Moreover, by the above proof, if $\Sing f\bar g \not= \emptyset$, then $f\bar g (\Sing f\bar g)$  is either the origin $0$, or the union of all the images $(u\bar v)(\gamma)$ over all arcs $\gamma$, and over all complex curve components $C\subset ((f,g)(\Sing (f,g)), 0)$. In particular $f\bar g (\Sing f\bar g)$ is well-defined as a semi-analytic set germ.
\end{proof}

The following result is the fusion of \cite[Theorem 1.1]{JT} and the above Proposition \ref{t:singfbarg},
and provides a
 complete answer to the NMG problem for $f\bar g$, 
 based on the preceding NMG  classification of the maps $(f,g)$:

\begin{theorem}\label{p:fbarg-nice} 
Let $f \bar g: (\bC^{n}, 0)\to (\bC^{2}, 0)$, $n>1$, for some non-constant holomorphic germs $f$ and $g$. 
 \begin{enumerate}
\rm \item \it  If $f\not= ug$ for any invertible function germ $u$, then $f\bar g$ is a NMG, and $(\im f\bar g, 0) = (\bC, 0)$.  
\rm \item \it  If $f= ug$ for some invertible function germ $u$, then  $f\bar g$ is a NMG if and only if
$\im (f,g)$ is a complex curve germ.
 \end{enumerate}
 \fin
\end{theorem}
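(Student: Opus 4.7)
The strategy is to combine two results already in place: Proposition \ref{t:singfbarg}, which shows unconditionally that $f\bar g(\Sing f\bar g)$ is a well-defined semi-analytic set germ of dimension at most $1$, and Theorem \ref{t:main1}, which classifies the well-definedness of $\im(f,g)$. Since the ``Sing'' half of the NMG definition is thus automatic, the whole theorem reduces to characterising the well-definedness of $\im f\bar g$ as a set germ at $0\in\bC$, together with computing it in the well-defined cases. The bridge to Theorem \ref{t:main1} is the factorisation $f\bar g=\phi\circ (f,g)$, where $\phi:\bC^2\to\bC$, $\phi(u,v):=u\bar v$.

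For (a), the hypothesis $f\neq ug$ places $(f,g)$ either in case (i) of Theorem \ref{t:main1} (where $\dim Z(f)\cap Z(g)=n-2$ and $(\im(f,g),0)=(\bC^2,0)$), in case (ii)(b), or in a sub-case of (ii)(a) in which $\psi:=\gcd(f,g)$ is a non-unit but $f_0:=f/\psi$ and $g_0:=g/\psi$ are not both units. In the first case, the surjectivity of $\phi$ onto every neighbourhood of $0$ yields $(\im f\bar g,0)=(\bC,0)$ at once; in case (ii)(b) the same conclusion holds because the generic rank-$2$ behaviour of $(f,g)$ forces its image to contain arbitrarily small open subsets of $\bC^2$ near $0$, even when $\im(f,g)$ itself is not a well-defined germ. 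In the factorised sub-case I would write $f\bar g=|\psi|^2\, f_0\bar g_0$ and analyse it along a generic analytic arc $t\mapsto x(t)$ with $x(0)=0$: Puiseux expansions of $\psi\circ x$ and $f_0\bar g_0\circ x$ show that as $t=re^{i\theta}$ varies, the modulus and argument of $f\bar g\circ x(t)$ sweep out a full disc around $0\in\bC$, giving $(\im f\bar g,0)=(\bC,0)$.

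For (b), the equality $f=ug$ forces $(f,g)=(ug,g)$ to have generic rank at most $1$, placing us in case (ii)(a) of Theorem \ref{t:main1}, where $\im(f,g)$ is either not a well-defined germ or is an irreducible plane curve germ $(C,0)$. If it is the curve $(C,0)$, then $\im f\bar g=\phi(C)$ is the image of a complex analytic curve under the real-analytic map $\phi$, hence a well-defined real semi-analytic set germ at $0$ by the \L ojasiewicz theorem on images, so $f\bar g$ is NMG. For the converse, assuming $\im(f,g)$ is not a well-defined germ, one must transport the obstruction to $\im f\bar g$: using the Puiseux computation in the proof of Proposition \ref{t:singfbarg}, $\phi$ restricted to any complex curve branch through $0$ different from the coordinate axes is locally finite-to-one, so the fluctuation of $(f,g)(B_\e)$ as $\e$ varies cannot be collapsed by $\phi$ into a stable germ image in $\bC$.

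The main obstacle is this last converse in (b), where one must rule out that the non-injectivity of $\phi$ (which identifies $(u,v)$ with every pair having the same Hermitian product $u\bar v$) accidentally smooths out the set-germ defect of $\im(f,g)$. The crucial input is that under the rank-one hypothesis of case (ii)(a), the potentially fluctuating image lies along finitely many complex curve branches, on each of which $\phi$ is either identically zero (along the coordinate axes, which contribute only $\{0\}$) or finite-to-one, so the oscillation is genuinely preserved in $\bC$.
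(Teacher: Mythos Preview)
The paper does not actually prove this theorem: the sentence preceding the statement says it is ``the fusion of \cite[Theorem 1.1]{JT} and the above Proposition \ref{t:singfbarg}'', and the $\Box$ after the statement closes the matter. So on the $\Sing$ half you and the paper agree (both invoke Proposition \ref{t:singfbarg}), while on the $\im$ half the paper simply quotes the companion article \cite{JT}; you are instead trying to \emph{rederive} that result from Theorem \ref{t:main1} through the factorisation $f\bar g=\phi\circ(f,g)$.

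Your derivation, however, contains a genuine error in part (b). You assert that ``the equality $f=ug$ forces $(f,g)=(ug,g)$ to have generic rank at most $1$''. This is false: one has
\[
d(ug)\wedge dg=(u\,dg+g\,du)\wedge dg=g\,du\wedge dg,
\]
so the generic rank is $2$ whenever $du\wedge dg\not\equiv 0$. Example \ref{ex:fbarg} ($u=1+w$, $g=z$) is exactly such a case. Your subsequent picture for the converse of (b) --- that the fluctuating image of $(f,g)$ ``lies along finitely many complex curve branches'' on which $\phi$ is finite-to-one --- therefore rests on a false premise: when the rank is generically $2$, the sets $(f,g)(B_\e)$ contain genuine open pieces of $\bC^{2}$, and the mechanism by which $\phi$ fails to stabilise them is not the one you describe. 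A far more direct entry point in case (b) is the identity $f\bar g=u\,|g|^{2}$: the modulus is $|u|\,|g|^{2}$, so the germ of $f\bar g(B_\e)$ at $0$ is governed by the range of $\arg u$ on $\{|g|\ \text{small}\}\cap B_\e$, and one must analyse when this range is $\e$-independent.

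Your treatment of part (a) is broadly sound in cases (i) and (ii)(b) --- generic rank $2$ does hold there, and one checks that $\phi$ maps any open set near $0\in\bC^{2}$ onto a neighbourhood of $0\in\bC$ --- but the Puiseux-along-a-generic-arc sketch for the (ii)(a) sub-case is too thin: a single arc gives one curve in the target, and you still owe the argument that the \emph{arguments} of $f\bar g$ sweep a full circle on every small ball. Likewise, in the forward direction of (b) your appeal to \L ojasiewicz skips the needed observation that $\phi^{-1}(\text{small})\cap\im(f,g)$ is already near $0$ in $\bC^{2}$ (which follows from $|u\bar v|=|u||v|$ and $|u|\asymp|v|$ on $\im(Ug,g)$), so that the germ comparison really reduces to $\phi$ restricted to the curve $C$.
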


\begin{example}\label{ex:fbarg}
$(f,g):\bC^2\to \C^{2}$, $(f,g)(z,w)=(z(1+w),z)$ is an example for Theorem \ref{p:fbarg-nice}(b) where $\im f\bar g$ is not a well-defined set germ. One has  $(f\bar g) (\Sing f\bar g)=\{0\}$.
\end{example}

 \section{Milnor set and nice map germs }\label{s:nmg}
 
 Let us now consider the general case of  a non-constant  analytic map germ $G:(\bR^{m},0) \rightarrow (\bR^{p}, 0)$, $m\ge p >1$.
 
 We recall that whenever the images $\im G$ and  $G(\Sing G)$ are well-defined as germs, then we say that $G$ is a \emph{nice map germ}, abbreviated NMG (Definition \ref{d:nice}).
 
Our main new result of this section is a convenient condition which implies NMG in full generality. 
It is very close to  problem (B) quoted in the Introduction, it almost coincides with 
the criterion for the existence of the stratified Milnor fibration. This fact sheds new light over the recent results \cite{ART, ART2} which use the NMG condition as a preliminary hypothesis for the existence of this fibration.   Briefly speaking we show here that we can remove the NMG condition from the statements about the local singular fibration. 

\begin{pdefinition}\label{d:discr}
 If $G$ is a NMG, then the \emph{discriminant}  $\Disc G := G(\Sing G)$ is a well-defined closed subanalytic set germ.
 \end{pdefinition}
 
\begin{proof} 
Since $\im G$ is well-defined as a set germ,  for some small enough balls centred at the origin  $B_1\subset \overline B\subset B_2$ we have $(G(B_1),0)= (G(B_2),0)=(G(\overline B),0)$. Thus  $\im G$ is a closed subanalytic set, since it is the well-defined germ of the analytic image of a compact set.

 Next we show that the boundary $(\partial \im G, 0)$ is contained in  $(G(\Sing G), 0)$. Let us remark that $\partial \im G$ is either empty, or  a closed subanalytic  proper subset of $\bR^{p}$ which is well-defined  as a set germ at $0\in \bR^{p}$.     Since $\im G$ is closed, we have the inclusion $\partial \im G \subset \im G$. Our claim then follows by the fact that the image by $G$ of the complement of $\Sing G$ is an open set germ.

 Finally we show that $(G(\Sing G), 0)$ is a closed subanalytic set germ as follows. For some small enough balls centred at the origin  $B_1\subset \overline B\subset B_2$, the  NMG property implies the equality of the set germs
$(G(\Sing G\cap B_1),0)= (G(\Sing G\cap B_2),0)=(G(\Sing G\cap \overline B),0)$, and therefore 
$G(\Sing G)$ is the well-defined germ of the analytic image of a compact set.
\end{proof}

\begin{remark}\label{r:discr}
In \cite{ACT} one defined the discriminant as the union $\overline{G(\Sing G)} \cup \partial \im G$. The above proof  shows  that,  a posteriori, this reduces to the ``classical'' definition of the discriminant $\Disc G := G(\Sing G)$, as in Definition \ref{d:discr}, provided of course that $G$ is a NMG. 

As $\Disc G$ is closed,  its complement is well-defined as a set germ at the origin,
and it is the disjoint union of finitely many open connected subanalytic set germs.  
\end{remark}

\subsection{The  Milnor set $M(G)$} \   \\
Let  $G:(\bR^{m},0) \rightarrow (\bR^{p}, 0)$ be a non-constant  analytic map germ, $m \ge p \ge1$.
Let $U \subset \bR^m$ be a connected manifold,  and let	
 $M(G_{|U}):=\left\lbrace x \in U \mid \rho_{|U} \not\pitchfork_x G_{|U} \right\rbrace $
	be the set of \textit{$\rho$-nonregular points} of $G_{|U}$, or \emph{the Milnor set of $G_{|U}$}, where
	  $\rho := \| \cdot \|$ denotes here the Euclidean distance function, and $\rho_{|U}$ is its restriction to $U$.  We tacitly consider $M(G_{|U})$ as a set germ at 0.
	  
	  By definition $M(G_{|U})$ coincides with the singular set  $\Sing (\rho, G)_{|U}$ defined in its turn as 
the set of points $x\in U$ such that  either  $\rank_{x}(G_{|U})$ is zero, or $\rank_{x}(G_{|U})$ is less than the maximal rank denoted $r_{U}$ and $r_{U}>0$,    or $\rank_{x}( \rho_{|U}, G_{|U}) = \rank_{x}(G_{|U}) = r_{U}>0$.  

Equivalently, the point $y\in U$ is not in $M(G_{|U})$ if and only if $\rank(G_{|U})$ is constant $>0$ in some neighbourhood of  $x$  and  $\rank_{x}( \rho_{|U}, G_{|U}) > \rank_{x}(G_{|U})$.

In the following we will actually consider the germ at 0 of $M(G_{|U})$. 
  It turns out from the definition that it is real analytic.

\medskip	  

  In case of map germs $G$ of rank $p$ outside the origin, it is clear from the definition that one has the property $M(G_{|U})\cap G^{-1}(0) \subset \{0\}$, which is called ``$\rho$-regularity'' condition, and  which Milnor \cite{Mi} exploited  to show  the existence of a well-defined tube fibration.  Later, in many papers this $\rho$-regularity was spelled out as a necessary condition for the existence of a well-defined fibration, either locally or in its version at infinity  \cite{Be, Ti1,  Ti2, Ti3, dST0, dST1, Ma,  ACT, ACT-inf} etc. In case of $G$ with non-isolated singularities, the $\rho$-regularity has to be properly defined 
 and it is no more automatic; we shall discuss this matter below, abutting to Definition \ref{d:tame}. For the use of the $\rho$-regularity in the stratified setting we also refer to \cite{Be} and to the recent papers \cite{ART, ART2}.

\medskip
	 	There exists a germ of a finite semi-analytic Whitney (a)-stratification\footnote{As shown originally by \L ojasiewicz \cite{Loj} for the much stronger Whitney (b)-condition, see e.g. \cite{Ka} and  its references to previous proofs in the literature. See also \cite{NTT} for a nice geometric proof and its relation to \cite{Ka}.}  $\bW_G = \{ W_\alpha\}_\alpha$  of the source of $(\bR^{m},0)$ 
such that the restriction of $G$ to every stratum\footnote{By definition, strata are connected manifolds.}  is a submersion to its image.  In particular every stratum is a nonsingular, open and connected semi-analytic set, and each restriction $G_{|W_\alpha}$ has constant rank.  
By definition the subset $\Sing G$ is a union of strata, and each connected component of its complement is a stratum.

\begin{definition}\label{d:Mstr} \ 
Let  $G:(\bR^{m},0) \rightarrow (\bR^{p}, 0)$, with $m\geq p >1$, be a non-constant  analytic map germ, and let $\bW$ be a finite semi-analytic Whitney (a)-stratification of $G$ at 0.

Let $W_\alpha \in \bW$  denote the germ of some stratum, and let 
$M(G_{|W_\alpha})$  
be the Milnor set of $G_{|W_\alpha}$. 
One calls $M(G):=\sqcup_{\alpha} M(G_{|W_\alpha})$
	the set of \textit{stratwise $\rho$-nonregular points} of $G$ with respect to the stratification $\bW$.
\end{definition}

Note  that $M(G)$  is  closed because $\bW$ is a Whitney (a)-stratification  (see e.g. \cite[Def. 2.2.1]{Ti3}).
 For instance it follows directly from the definition that any stratum included in $G^{-1}(0)$ is in $M(G)$, and that
 we have the implication:  
  \[ \rank(G_{|W_\alpha})=\dim W_\alpha >0  \implies W_\alpha \subset M(G).\]

Let us also note that if $x\not\in M(G)$, $x\in W_{\alpha}$, then the fibre  $G^{-1}(G(x))\cap W_{\alpha}$
has positive dimension, it is non-singular at $x$, and transverse  at $x$ to the sphere $\{\rho=\rho(x)\}$.

\begin{definition}\label{d:tame}
Let $G:(\bR^{m},0) \rightarrow (\bR^{p}, 0)$, with $m \geq p >1$, be a non-constant  analytic map germ. We say that \emph{$G$ is tame} if it satisfies the following condition, viewed as an inclusion of set germs:
\begin{equation}\label{eq:main2}
\overline{M(G) \m  G^{-1}(0)} \cap G^{-1}(0) \subset     \{ 0\}.
\end{equation}
 \end{definition}

This appeared as a condition for the existence of a local Milnor fibration in a conical neighbourhood of $G^{-1}(0)$
 \cite[Proof of Theorem 1.3]{ACT}. In the particular setting $\Sing G \subset G^{-1}(0)$,  condition \eqref{eq:main2} was needed for the existence of a local Milnor tube fibration in \cite{Ma}, \cite[Proposition 5.3]{dST0} and \cite[Theorem 2.1]{ACT}. Here we are beyond these settings, we work in a highly singular situation,  and we are interested in the existence of the image as a set germ as a preliminary condition in the  problem of the existence of singular local fibrations.

It follows from the definition that if $G$ is tame then the closure of the strata of dimensions $\le p$ intersect  $G^{-1}(0)$
only at $\{ 0\}$. A particular case of tame maps is: $G$ with $G^{-1}(0) = \{0\}$ as a set germ at 0. 


\subsection{Universal criterion}



\begin{theorem}\label{main-new}
Let  $G:(\bR^{m},0) \rightarrow (\bR^{p}, 0)$, with $m \geq p >1$, be a non-constant  analytic map germ.
If $G$ is tame then:
\begin{enumerate}
 \rm \item \it $G$ is a NMG at the origin.
 \rm \item \it the image  $G(W_\alpha)$ is a well-defined set germ at the origin, for any stratum $W_\alpha \in \bW$.
\end{enumerate}
\end{theorem}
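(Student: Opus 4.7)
The plan is to prove (b) first, deduce (a) as a formal consequence of the finiteness of $\bW_G$, and derive (c) from (b) together with the observation that every stratum not contained in $\Sing G$ carries a submersion of rank $p$ onto an open subset of $\bR^p$.

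\textbf{For (b),} I fix a stratum $W_\alpha$ and suppose for contradiction that $G(W_\alpha)$ is not a well-defined set germ at $0$: the subanalytic set $G(W_\alpha\cap B_\e)\setminus G(W_\alpha\cap B_{\e'})$ then accumulates at $0$ for some $0<\e'<\e$, so the \L ojasiewicz curve selection lemma supplies a real-analytic arc $\gamma(s)\to 0$ lying in this difference for $s>0$. For each such $s$ the fibre $W_\alpha\cap G^{-1}(\gamma(s))$ is non-empty but disjoint from $B_{\e'}$, giving $\mu(s):=\inf_{W_\alpha\cap G^{-1}(\gamma(s))}\rho\geq \e'$. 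Take a minimiser $x^*(s)$ of $\rho$ on the compact set $\overline{W_\alpha\cap B_\e}\cap G^{-1}(\gamma(s))$; since $\rho(x^*(s))<\e$, it lies in some stratum $W_\beta\subset\overline{W_\alpha}$ with $x^*(s)\in B_\e$, where it is a local minimum of $\rho$ on the smooth submanifold $W_\beta\cap G^{-1}(\gamma(s))$, hence a critical point of that restriction, so $x^*(s)\in M(G_{|W_\beta})\subset M(G)$ with $G(x^*(s))=\gamma(s)\neq 0$. Letting $s\to 0$, a convergent subsequence $x^*(s_k)\to x_\infty$ gives $x_\infty\in \overline{M(G)\setminus G^{-1}(0)}\cap G^{-1}(0)$, which \eqref{eq:main2} forces to equal $\{0\}$; combined with the a priori bound $\rho(x^*(s_k))\geq \e'>0$, this contradicts $x_\infty=0$.

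\textbf{For (a) and (c).} Part (a) is immediate: $\bW_G$ being finite, both $\im G=\bigcup_\alpha G(W_\alpha)$ and $G(\Sing G)=\bigcup_{W_\alpha\subset\Sing G}G(W_\alpha)$ are finite unions of the well-defined germs provided by (b). For (c), $\Sing G$ is a closed analytic germ, so its continuous image on $\overline{B_\e}$ is compact and $G(\Sing G)$ is closed as a germ; the statement thus reduces to $\partial\im G\subset G(\Sing G)$. Each stratum $W_\alpha\not\subset\Sing G$ has constant rank $p$, so $G_{|W_\alpha}$ is a submersion and $G(W_\alpha)$ is open in $\bR^p$; hence any $y\in\partial\im G\cap\im G$ must lie in $G(W_\alpha)$ for some $W_\alpha\subset\Sing G$, giving $y\in G(\Sing G)$. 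For $y\in\partial\im G\setminus\im G$ with $y\neq 0$, choose $y_n\to y$ with $y_n\in G(W_\alpha)$ for a single stratum $W_\alpha$ (finiteness), use (b) to lift each $y_n$ to some $x_n^{\e'}\in W_\alpha\cap B_{\e'}$ for every small $\e'>0$, extract $x_n^{\e'}\to x_\infty^{\e'}\in\overline{B_{\e'}}$ with $G(x_\infty^{\e'})=y$, and let $\e'\to 0$ to force $x_\infty^{\e'}\to 0$ and therefore $y=G(0)=0$ by continuity, contradicting $y\neq 0$; this case is therefore vacuous, and $y=0$ lies automatically in $\im G$ and falls under the first case.

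\textbf{Main obstacle.} The genuinely delicate point is step (b) in the case where the minimiser $x^*(s)$ escapes from $W_\alpha$ into a lower stratum $W_\beta\subsetneq\overline{W_\alpha}$: then $\rho(x^*(s))$ is bounded below only by $\inf_{\overline{W_\alpha}\cap G^{-1}(\gamma(s))}\rho$, which a priori may drop below $\e'$. The cleanest remedy I see is to strengthen the tameness inclusion \eqref{eq:main2}, via a \L ojasiewicz-type inequality between $d(x,G^{-1}(0))$ and $|G(x)|$, into the quantitative form: for every $\eta>0$ small there exists $\delta_\eta>0$ with $M(G)\cap(B_{\epsilon_0}\setminus B_\eta)\cap G^{-1}(B^p_{\delta_\eta}\setminus\{0\})=\emptyset$, and to combine it with the closedness of $M(G)$ furnished by Whitney (a)-regularity. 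Together these inputs forbid any stratwise Milnor critical point of a small nonzero fibre value from sitting in the annulus $B_{\epsilon_0}\setminus B_{\e'}$, forcing either $\rho(x^*(s))\geq\e'$ (the contradiction above) or a configuration that violates $\mu(s)\geq\e'$ through a Whitney-(a)-limiting lift back into $W_\alpha\cap B_{\e'}$. I expect this stratum-boundary analysis to be the technical heart of the proof.
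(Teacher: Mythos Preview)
Your overall strategy---minimise $\rho$ on fibres, land in $M(G)\setminus G^{-1}(0)$, invoke tameness to force the limit to $0$, contradict a lower bound on $\rho$---is exactly the mechanism the paper uses. The paper packages it as a general lemma: for a \emph{closed} subset $X\ni 0$ and $h=G_{|X}$, define $N_h:=\{x\in X : \rho(x)=\min\{\rho(y):y\in h^{-1}(h(x))\}\}$; then $\im h$ is a well-defined germ if and only if $0$ is isolated in $\overline{N_h}\cap h^{-1}(0)$. Since $N_h\subset M(G)$ and $N_h\cap h^{-1}(0)=\{0\}$ (the origin is always the global minimiser on the zero fibre), tameness gives the isolation immediately. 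Part~(a) is then the lemma applied to $X=\bR^m$ and $X=\Sing G$; part~(b) to $X=\overline{W_\alpha}$, with a recursive unpacking for the open strata.

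The ``main obstacle'' you identify is real for your formulation but dissolves with a one-line change: run the argument on the \emph{closure} $\overline{W_\alpha}$ rather than on $W_\alpha$. If $\gamma(s)\in G(\overline{W_\alpha}\cap B_\e)\setminus G(\overline{W_\alpha}\cap B_{\e'})$, then the entire fibre $\overline{W_\alpha}\cap G^{-1}(\gamma(s))$ avoids $B_{\e'}$, so your minimiser $x^*(s)$---wherever in the frontier stratification it lands---satisfies $\rho(x^*(s))\ge\e'$ automatically, and the contradiction goes through without any \L ojasiewicz estimate or Whitney-(a) boundary analysis. Your proposed remedy via a quantitative inequality is therefore unnecessary (and you do not complete it, so as written the proof has a gap). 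The passage from well-definedness of the $G(\overline{W_\alpha})$ to that of the open $G(W_\alpha)$ is then done by induction on $\dim W_\alpha$; the paper itself is brief here, but this is the intended route. Your treatments of (a) and (c) are fine, though for (c) the second case ($y\in\partial\im G\setminus\im G$, $y\neq 0$) can be handled more directly: once $\im G$ is a well-defined germ, $\overline{\im G}\setminus\im G$ is contained in $G(S_\e^{m-1})$ near the origin, and tameness forces such sphere-images to lie in the interior of $\im G$ away from $G(\Sing G)$.
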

\begin{remark}
That $\im G$ is well-defined as a set germ has been already proved in the following  particular cases:

\smallskip
\noindent
(i). $\Sing G \subset  G^{-1}(0)$ and  $G$ satisfies condition \eqref{eq:main2}.  \\
It was shown  by Massey \cite[Cor. 4.7]{Ma} that $(\im G,0) = (\bR^p,0)$.  In \emph{loc.cit.}, the two conditions listed at (i) were called   ``Milnor conditions (a) and (b)'', respectively. 

\smallskip
\noindent
(ii).  $\Sing G \cap G^{-1}(0) \not= G^{-1}(0)$.  \\
This is  our  Lemma \ref{l:im}. It also follows that $(\im G,0) = (\bR^p,0)$ in this case too. Notice that condition  \eqref{eq:main2} is not required here.

\medskip 
In what concerns the image of $G$ only,  the following more subtle case remains to be proved:

\noindent
(iii). $\Sing G$ includes $G^{-1}(0)$ strictly,  and  $G$ is tame.

\medskip
Moreover,  in the above non-trivial cases (ii) and (iii), it also remains to prove that the image of $\Sing G$  by $G$ is a well-defined set germ.
\end{remark}
\begin{proof}[Proof of Theorem \ref{main-new}]

Let $X\subset \bR^m$ be a closed subset containing $0$, and let $h:(X, 0)\to (\bR^p, 0)$ be a continuous map germ.  We define :
\[ N_h:=\{ x\in X \mid \rho_{|X}(x)=\min\{\rho_{|X}(y) \mid y\in h^{-1}(h(x))\}\}. \]
Since $X$ is closed, $N_h$ intersects each non-empty fibre of $h$. We then prove the following criterion for a continuous map germ to have a well-defined image germ.

\begin{lemma}\label{l:N}
The image of $h$ is a well-defined germ at the origin if and only if
$0$ is an isolated point of $\overline N_h\cap h^{-1}(0)$.
\end{lemma}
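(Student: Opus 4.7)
The plan is to prove both implications by contrapositive, exploiting two features of $N_h$: by construction, if $z\in N_h$ then every point of the fibre $h^{-1}(h(z))$ lies at distance at least $\rho(z)$ from the origin, so $h^{-1}(h(z))\cap B_{\rho(z)}=\emptyset$; and, as noted just before the lemma, $N_h$ meets every non-empty fibre of $h$ (using that $X$ is closed and $\rho$ is proper).

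For the \emph{only if} direction, suppose $0$ is not isolated in $\overline{N_h}\cap h^{-1}(0)$; pick a sequence $x_n\in \overline{N_h}\cap h^{-1}(0)$ with $x_n\neq 0$ and $x_n\to 0$. Given any $\e_0>0$, choose $n$ with $\rho(x_n)<\e_0/4$ and set $\e:=2\rho(x_n)$, $\e':=\rho(x_n)/2$, so that $0<\e'<\rho(x_n)<\e<\e_0$. Since $x_n\in\overline{N_h}$, there is $(z^k)_k\subset N_h$ with $z^k\to x_n$; for $k$ large one has $\rho(z^k)\in(\e',\e)$, whence $h(z^k)\in h(B_\e\cap X)$, while the defining property of $N_h$ forces $h^{-1}(h(z^k))\cap B_{\e'}=\emptyset$ and hence $h(z^k)\notin h(B_{\e'}\cap X)$. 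Since $h(z^k)\to h(x_n)=0$, this exhibits points of $h(B_\e\cap X)\setminus h(B_{\e'}\cap X)$ accumulating at $0$, showing $(h(B_\e\cap X),0)\neq(h(B_{\e'}\cap X),0)$. As $\e_0>0$ was arbitrary, the image of $h$ is not well-defined as a germ.

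For the \emph{if} direction, assume there exists $\e_0>0$ such that $\overline{N_h}\cap h^{-1}(0)\cap\overline{B_{\e_0}}=\{0\}$. Fix any $0<\e'<\e\leq\e_0$; since $h(B_{\e'}\cap X)\subset h(B_\e\cap X)$, only the reverse inclusion in a neighbourhood of $0$ remains to be checked. If it fails, there exists $v_n\to 0$ with $v_n\in h(B_\e\cap X)\setminus h(B_{\e'}\cap X)$, and we may pick $y_n\in N_h\cap h^{-1}(v_n)$. The condition $v_n\notin h(B_{\e'}\cap X)$ forces $\rho(y_n)\geq\e'$, while $v_n\in h(B_\e\cap X)$ combined with the minimising property of $y_n$ yields $\rho(y_n)<\e$. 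Thus $(y_n)$ lies in the compact annulus $\overline{B_\e}\setminus B_{\e'}$; extracting a convergent subsequence $y_{n_k}\to y_\infty$ gives $y_\infty\in\overline{N_h}$, $h(y_\infty)=0$ and $\e'\leq\rho(y_\infty)\leq\e_0$, contradicting the isolation hypothesis. So the germs agree for all $0<\e',\e\leq\e_0$, and the image of $h$ is a well-defined germ.

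The main delicate point lies in the forward direction: since $0$ is the unique element of $N_h\cap h^{-1}(0)$ (being its own closest point in this fibre), the $x_n\neq 0$ can never lie in $N_h$ itself but only in its closure, so one must approximate them by elements $z^k\in N_h$ and tie the auxiliary scale $\e'$ a posteriori to $\rho(x_n)$; prescribing $\e$ and $\e'$ first and then hunting for a suitable $x_n$ would fail because $\rho(x_n)\to 0$.
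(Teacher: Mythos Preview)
Your proof is correct and follows essentially the same route as the paper's. Both directions proceed by contradiction using the two key properties of $N_h$ (it meets every non-empty fibre, and membership in $N_h$ forces the whole fibre to lie outside the ball of radius $\rho(x)$), and the compactness/subsequence argument in your ``if'' direction is identical to the paper's. Your final paragraph, noting that $N_h\cap h^{-1}(0)=\{0\}$ so the non-trivial accumulation points lie only in $\overline{N_h}\setminus N_h$ and must be approximated, is a nice clarification the paper leaves implicit; the only cosmetic difference is that the paper fixes the outer ball first and chooses the inner one after locating the bad point $\tilde x$, whereas you choose both radii simultaneously from $\rho(x_n)$.
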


\begin{proof}
If  $0$ is an isolated point of $\overline N_h\cap h^{-1}(0)$, then let $B_0$ be a ball centred at the 
origin such that $\overline B_0\cap \overline N_h\cap h^{-1}(0)=\{0\}$. We claim that for any two open balls
centred at the origin, $B$ and $B_1$, with $B_1\subset B\subset B_0$ we have the equality of set germs $(h(B),0)=(h(B_1),0)$.
Indeed, suppose that this is not the case. Then there exists a sequence of points
$\{y_n\}_{n\in \bN}$, $y_n\in h(B)\setminus h(B_1)$ and $y_n\to 0$. We may choose $x_n\in B \cap N_h\cap h^{-1}(y_n)\neq\emptyset$. Since $y_n\not\in h(B_1)$ we have
$x_n\not\in B_1$. The bounded sequence $\{x_n\}$  has some convergent subsequence, let then $\tilde x$ be the limit of this  subsequence. We then have $\tilde x\in \overline B \cap \overline N_h\cap h^{-1}(0)$, which by our hypothesis implies
$\tilde x = 0$. But our construction yields $\tilde x\not\in B_1$, which is contradictory. 

Conversely, suppose that the image of $h$ is a well-defined germ at the origin. We choose a ball $B$
such that, for any ball centred at the origin $B_1\subset B$, we have the equality of set  germs  $(h(B),0)=(h(B_1),0)$. 
We claim that 
$B\cap \overline N_h\cap h^{-1}(0)=\{0\}$. Suppose that this is not true and let 
$\tilde x\in B\cap \overline N_h\cap h^{-1}(0)$ with  $\tilde x\neq 0$. 
We choose a ball $B_1$, centred at the origin, such that 
$\tilde x\not \in \overline B_1$, and a sequence $\{x_n\}_{n\in \bN}\subset N_h$ such that $x_n\to\tilde x$.
Since $\tilde x\in B\setminus \overline B_1$ one may assume that $x_n\in B\setminus \overline B_1$ 
for all $n$. For $y_n:=h(x_n)$,  one then has $y_n\in h(B)$ and $y_n\to 0$. From 
$x_n\in N_h$ and $x_n\not \in B_1$ we deduce that $h^{-1}(y_n)\cap B_1=\emptyset$ and therefore 
$y_n\not \in h(B_1)$. We thus have $y_n\in h(B)\setminus h(B_1)$ for all $n$, with $y_n\to 0$,  and this contradicts
the assumed equality of germs $(h(B),0)=(h(B_1),0)$.
\end{proof}

In order to finish the proof of Theorem \ref{main-new}, we
apply Lemma \ref{l:N}. 
Let us observe that $N_{G}$ is a subset of $M(G)$ since each minimum point $x\in N_{G}$ is a critical point of the distance function restricted to the stratum to which $x$ belongs. Then the condition of Lemma \ref{l:N} is insured by the tameness condition \eqref{eq:main2}.

\noindent
Part $(a)$ is an application of Lemma \ref{l:N} for $h=G$ and $X:= \bR^{m}$,  and then for the restriction $h:=G_{|\Sing G}$.  

\noindent
For part (b) we first apply Lemma \ref{l:N} to the restriction of $G$ to $X:= \overline{W_\alpha}$, for all strata $W_\alpha \in \bW$. Then we extract the information for the image of the open strata recursively, starting from the lowest dimension. It follows that the image $G(W_\alpha)$ of each stratum is well-defined as a set germ at 0.
\end{proof}

\begin{remark}
If $X$ and $h$ are analytic then $N_h$ is subanalytic, since it can be defined as follows: if $A:=\{(x,y)\in
X\times X \mid h(x)=h(y), \ \rho(x)>\rho(y)\}$ and $p:X\times X\to X$, $p(x,y)=x$, then $N_h=X\setminus p(A)$. Here $X$ may be assumed compact since we deal with germs, and hence $h$ is proper.

One may also remark that the condition $\overline N_G\cap G^{-1}(0) = \{ 0\}$ does not imply that $G$ is tame whereas the reciprocal is always true. As an example, 
Sabbah's example considered in the Introduction does not have a tube fibration \cite{Sa}, and therefore is not tame according to our Theorem \ref{t:tube}.
\end{remark}

\subsection{Examples with  $(\im G, 0)\not= (\bR^p,0)$}

We give below two examples for the situation $(\im G, 0)\not= (\bR^p,0)$, one of which is tame and the other is not.
\begin{example}\label{ex:lessimage}\cite[Example 6.7]{ART}
$G : (\bR^{3}, 0) \to (\bR^{2}, 0)$, 
 $G(x,y,z)= (xy, z^{2})$ has $\Sing G = \{ z=0\} \cup \{x=y=0\}$ and $G^{-1}(0) =  \{x=z=0\}\cup \{y=z=0\}$ thus $G^{-1}(0)\subsetneq \Sing G$.  The  regular stratification $\bW$ can be described as follows: 
 
 - the strata of dimension 2 are the connected components of the plane $\{ z=0\}$ minus the two axes $\{x=z=0\}\cup \{y=z=0\}$, and their images are the two components of the dotted line $\bR \times \{ 0\} \m \{(0,0)\}$.
 
 - the strata of dimension 1 are the connected components of the union of the 3 coordinate axes minus the origin; the image of the dotted $z$-axis is $ \{ 0\}\times \bR_+$ whereas the other two axes are sent to $\{(0,0)\}$.

 We get $M(G) = \{ x=\pm y\}\cup \{ z=0\} \subset \bR^3$, and thus condition \eqref{eq:main2} is not fulfilled because of the surface component $\{ z=0\}$, so $G$ is not tame. However $G$ is nice, with
  $\im G = \bR \times \bR_{\ge 0}\not= \bR^{2}$,    $\partial \im G = \bR\times \{0\} \subset G(\Sing G) = \{0\} \times \bR_{\ge 0} \cup \bR\times \{0\}$.
\end{example}

\begin{example}\label{ex:hansen} 
Writing Example \ref{ex:fbarg} in real coordinates we obtain
$F:\bR^4\to\bR^2$, 
\[F(x,y,u,v)=((x^2+y^2)(1+u),(x^2+y^2)v). \]
We have 
$\Sing F =  \{x=y=0\} = F^{-1}(0)$, and
\[M(F)=\{u(1+u)+v^2=\frac{x^2+y^2}2\}\cup\{x=y=0\}.\]
If we restrict $F$ to the hyperplane $\{u=0\}$, we obtain Hansen's example  (see \cite{Ha, ART}) 
$G : (\bR^{3}, 0) \to (\bR^{2}, 0)$, 
\[ G(x,y,v)= (x^{2}+ y^{2}, v(x^{2}+ y^{2}))\]
 for which 
$\Sing G =  \{x=y=0\} = G^{-1}(0)$, and $M(G) = \bR^{3}$.  

In both examples the map germs are not tame, and  one can easily see that
the images are not well-defined as  set germs.
\end{example}


\section{Tame maps and the existence of the singular Milnor tube fibration}\label{s:sing}


One of the main motivation for the study of the image of map germs is the study of the local fibrations.  We are concerned with the case of positive dimensional discriminant. One has introduced in \cite{ACT}  the notion of \emph{singular 
Milnor fibrations} and has proved a fundamental existence criterion.  Here we give a sharper  one: we show that tame map germs are endowed with a local singular fibration.  

\smallskip

Let  $G:(\bR^{m},0) \rightarrow (\bR^{p}, 0)$ be a non-constant  analytic map germ, $m> p >1$, and let 
$\bW$ be the Whitney stratification  of $G$ at 0. Assuming from now on that $G$ is tame, our Theorem \ref{main-new}   tells that the 
images of all strata of $\bW$ are well-defined as set germs at 0. This implies that, by using the classical stratification theory, see e.g. \cite{Hi, DSW, LSW},  one may stratify the target too.  More precisely,
  there exists a germ of a finite subanalytic stratification $\bS$   of  the target such that  $\Disc G$ is a union of strata,  and that $G$ is  a stratified submersion relative to the couple of stratifications $(\bW, \bS)$,
 meaning that the image by $G$ of a stratum  $W_\alpha \in \bW$ is  a single stratum of $S_{\beta} \in \bS$, 
	and that the restriction $G_{|}:W_\alpha \to S_{\beta}$ is a submersion.
	One calls  $(\bW, \bS)$ a \emph{regular stratification of the map germ $G$.}
	
	Such a stratification has been introduced in \cite[Def. 6.1]{ART}\footnote{Without assuming the condition \eqref{eq:main2}.} and the map germs $G$ for which this exists are called \emph{S-nice}.  
	Then, as a consequence of   Theorem \ref{main-new}, we get:
\begin{corollary}\label{c:tamenice}
Let  $G:(\bR^{m},0) \rightarrow (\bR^{p}, 0)$, with $m\ge p >1$, be a non-constant  analytic map germ.
If $G$ is tame then $G$  is S-nice.	
\fin
\end{corollary}

\begin{remark}\label{r:tame}
S-nice map germs may exist even without assuming the condition \eqref{eq:main2}. 
By Theorem \ref{t:main1} and Theorem \ref{p:sing}, the map germs of type $(f,g)$ such that $\gcd(f,g) =1$ are S-nice.
Indeed, we get that $\Disc (f,g) = (f,g)(\Sing(f,g))$ consists of only 1-dimensional complex strata and 0 as the single point-stratum.
Similarly, by Proposition \ref{t:singfbarg} and Theorem \ref{p:fbarg-nice}, the map germs $f\bar g$ such that $\gcd(f,g) =1$ are S-nice map germs.
\end{remark}

\begin{definition}\label{d:tube1}\cite[Definition 6.3]{ART}
Let $G:(\bR^{m},0) \rightarrow (\bR^{p}, 0)$, $m\ge p>1$,  be a non-constant S-nice analytic map germ.
We say that $G$ has  a \emph{singular Milnor tube fibration} relative to some regular stratification $(\bW, \bS)$, which is well-defined as a germ at the origin by our assumption,  if for any small enough $\e > 0$ there exists  $0<\eta \ll \e$ such that the restriction:
\begin{equation}\label{eq:tube1}
G_| :  B^{m}_{\e} \cap G^{-1}( B^{p}_\eta \m \{ 0\} ) \to  B^{p}_\eta \m \{ 0\} 
\end{equation}
is a stratified locally trivial fibration which is independent, up to stratified homeomorphisms, of the choices of $\e$ and $\eta$. 
\end{definition}

Here is what means more precisely ``independent, up to stratified homeomorphisms, of the choices of $\e$ and $\eta$'':  when replacing $\e$ by some $\e'<\e$ and $\eta$ by some small enough $\eta'<\eta$, then the  fibration  \eqref{eq:tube1} and the analogous fibration for $\e'$ and $\eta'$
have the same stratified image in the smaller disk $B^{p}_{\eta'} \m \{ 0\}$, and the fibrations are stratified diffeomorphic over this disk. This property is based on the fact that the image of $G$ is well-defined as a stratified set germ, which amounts to our assumption of ``S-nice''.

By \emph{stratified locally trivial fibration} we mean that for any stratum $S_{\beta}$, the restriction $G_{| G^{-1}(S_{\beta})}$ is a locally trivial stratwise fibration.
The non-empty fibres are those over some connected stratum $S_{\beta} \subset \im G$ of $\bS$. Each such fibre is a singular stratified set, namely it is the union of its intersections with all strata $W_{\alpha}\subset G^{-1} (S_{\beta})$.


\medskip

The following existence theorem gives a fundamental shortcut to the existence  theorem \cite[Theorem 6.5]{ART} by eliminating the condition  ``S-nice'' from the hypotheses. 

\begin{theorem} \label{t:tube}
	Let $G:(\bR^m, 0) \to (\bR^p,0)$, $m \ge p>1$,  be a non-constant analytic map germ. 	
	If $G$ is tame, then $G$ has a singular Milnor tube fibration \eqref{eq:tube1}.
\end{theorem}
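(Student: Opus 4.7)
The plan has two parts: first, upgrade the tameness hypothesis to the S-nice property via Theorem \ref{main-new}, thereby producing a regular stratification $(\bW, \bS)$ of source and target; second, build the Milnor tube fibration by combining the stratified submersivity of $G$ on the complement of $G^{-1}(0)$ with the control on $M(G)$ provided by \eqref{eq:main2}, and apply Thom's first isotopy lemma. Once this is done, the statement becomes essentially \cite[Theorem 6.5]{ART} with the ``S-nice'' hypothesis now automatic.

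For the first part, I would apply Theorem \ref{main-new} directly: conclusions (a) and (b) tell us that $\im G$, $G(\Sing G)$ and each $G(W_\alpha)$ for $W_\alpha\in \bW$ are well-defined as set germs at the origin, and (c) identifies $\Disc G$ with $G(\Sing G)$. By the standard subanalytic stratification theory \cite{GLPW, GM}, one may then refine the discriminant into a finite subanalytic stratification $\bS$ of the target $(\bR^p, 0)$ so that $\Disc G$ is a union of strata, the image of every $W_\alpha$ is a single stratum of $\bS$, and each induced restriction $G_{|W_\alpha}: W_\alpha \to G(W_\alpha)$ is a submersion. This gives exactly the S-niceness of $G$ with regular stratification $(\bW, \bS)$ as in \cite[Def. 6.1]{ART}.

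For the second part, I would construct the tube as follows. By the closedness of $M(G)$ (a consequence of the Whitney (a) property of $\bW$) and the tameness condition \eqref{eq:main2}, for every sufficiently small $\e > 0$ the set $\overline{M(G) \m G^{-1}(0)} \cap G^{-1}(0) \cap \overline{B^m_\e}$ reduces to $\{0\}$. A compactness argument, together with continuity of $G$, then yields $0 < \eta \ll \e$ such that
\begin{equation*}
S^{m-1}_\e \cap G^{-1}(\overline{B^p_\eta} \m \{0\}) \cap M(G) = \emptyset.
\end{equation*}
On the tube $B^m_\e \cap G^{-1}(B^p_\eta \m \{0\})$ the restriction of $G$ to each stratum $W_\alpha$ is then simultaneously a submersion onto its image stratum of $\bS$ (by S-niceness) and transverse to the Euclidean distance $\rho_{|W_\alpha}$ along the boundary $S^{m-1}_\e$. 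Thom's first isotopy lemma applied to the resulting proper stratified submersion \eqref{eq:tube1} yields a stratified locally trivial fibration over each connected stratum $S_\beta \s B^p_\eta \m \{0\}$.

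Independence of the fibration, up to stratified homeomorphism, from the choices of $\e$ and $\eta$ is the standard controlled vector field argument: one lifts radial vector fields on the source and target balls through the stratified submersion to produce stratified isotopies identifying the fibrations associated with different pairs $(\e, \eta)$, exactly as in \cite[Theorem 6.5]{ART}. I expect the main obstacle to be the uniform separation between $M(G)\m G^{-1}(0)$ and $G^{-1}(0)$ on the sphere $S^{m-1}_\e$; this is precisely the point at which the tameness condition \eqref{eq:main2} combined with the closedness of the subanalytic set $M(G)$ does the essential work, while the rest is standard stratified Thom--Mather theory as already deployed in \cite{ART, ART2}.
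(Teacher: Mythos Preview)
Your proposal is correct and follows essentially the same route as the paper: the paper likewise takes the regular stratification $(\bW,\bS)$ (obtained via Theorem \ref{main-new}, as explained in the paragraph preceding the theorem), uses condition \eqref{eq:main2} to ensure that for small $\e$ and $0<\eta\ll\e$ the restriction of $G$ to each stratum over $B^p_\eta\m\{0\}$ is a submersion on a manifold with boundary transverse to $S^{m-1}_\e$, and then invokes the Thom--Mather Isotopy Theorem on the resulting proper stratified submersion, with independence of $\e,\eta$ again deduced from \eqref{eq:main2}. The only detail you leave implicit, and which the paper states explicitly, is that for small $\e$ the sphere $S^{m-1}_\e$ is transverse to all strata of $\bW$, so that $S^{m-1}_\e\cap\bW$ is itself a Whitney stratification of the boundary; this is the standard conical-structure fact and your argument uses it tacitly when forming the stratified manifold-with-boundary on which Thom's lemma is applied.
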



\begin{proof} 
Let us fix a  regular stratification $(\bW, \bS)$. By definition, the restriction of $G$ to any stratum  $W_{\alpha}\in \bW$ is  nonsingular and of constant rank.  

Let us first consider the strata $W_{\alpha}$ such that the fibres of  the restriction $G_{|W_{\alpha}}$ are of dimension $>0$, equivalently $\corank(G_{|W_{\alpha}})\ge 1$.
Condition \eqref{eq:main2} implies the existence of  $\e_0>0$ such that, for any  $0<\e <\e_0$, there exists $\eta$, $0<\eta \ll \e$, such that, for any stratum $S_{\beta}\in \bS$, $S_{\beta}\subset G(W_{\alpha})$,  the restriction map
\begin{equation}\label{eq:restrG}
G_{|}:  W_{\alpha}\cap \overline{B^m_\e} \cap G^{-1}(B^{p}_\eta \m \{0\}) \to  S_{\beta} \cap  B^{p}_{\eta} \m \{0\}
\end{equation}
is a submersion on a manifold with boundary.  Indeed, 
since the sphere $S^{m-1}_\e$ is transversal to all the finitely many strata of the Whitney stratification $\bW$ at $0\in \bR^{n}$,
it follows that  the intersection $S^{m-1}_\e \cap \bW$ is a Whitney stratification $\bW_{S, \e}$ of $S^{m-1}_\e$.  Condition \eqref{eq:main2} tells that the restrictions of the map $G$ to strata of $\bW$ is stratified-transversal to the strata of the stratification $\bW_{S, \e}$, for any $0<\e < \e_{0}$. 

As for the strata	$W_{\alpha}$ such that the fibres of  the restriction $G_{|W_{\alpha}}$ are of dimension $0$, we have seen that they belong to $M(G)$, by definition. Thus, since these strata do not intersect the sphere boundary of the source in \eqref{eq:restrG},  the restrictions of $G$ to such strata are proper. 

It then follows that the map $G$ is a proper stratified submersion, thus it is  a stratified fibration by Thom-Mather Isotopy Theorem.  Condition \eqref{eq:main2} also implies that this fibration is independent of $\e$ and $\eta$ up to stratified homeomorphisms. 
\end{proof}

\begin{remark}\label{r:dim0}
  If $G:(\bR^m, 0) \to (\bR^p,0)$, $m\ge p>1$, is a non-constant analytic map germ with $G^{-1}(0)=0$, then $G$ is a NMG and has a singular Milnor tube fibration \eqref{eq:tube1}.  Notice that we do not require here the tameness of $G$.
  
Let us prove our claim. Indeed, 
  the NMG property follows trivially from Proposition \ref{l:nice}. Moreover $G$ is a proper map. Indeed, if $G$ is non-proper then, for any small enough $\e>0$, there exists a sequence of points $(x_{n})_{n\in \bN}\subset B_{\e}$ such that $\lim_{n\to \ity} \| x_{n}\| >0$ and $\lim_{n\to \ity} G(x_{n}) =0$. But this is impossible because $G^{-1}(0)=0$. Since our $G$ is a proper stratified submersion, one concludes like in the above proof of Theorem \ref{t:tube}.

\end{remark}

\subsection{Milnor-Hamm fibrations over the complement of $\Disc G$.} \ \\
In \cite{ART} one has studied the existence of a fibration over the complement of the discriminant, whenever
the map is a NMG. Such fibrations have been called \emph{Milnor-Hamm fibrations} in \emph{loc.cit}.
We may now remark that under condition \eqref{eq:main2} our proof shows the existence of stratified fibrations over each stratum of the target. In particular  the Milnor-Hamm fibration outside the discriminant $\Disc G$  is well-defined since by construction $\Disc G$ is a union of strata.  One may then derive from Theorem \ref{t:tube} the following Milnor-Hamm fibration result:

\begin{corollary}\label{c:tube-hamm}
Under the hypotheses of Theorem \ref{t:tube} (or of the Remark \ref{r:dim0}, respectively),  the map germ $G$ has a Milnor-Hamm fibration over $B^{p}_{\eta} \m \Disc G$, with nonsingular Milnor fibres over each connected component. 
\fin
\end{corollary}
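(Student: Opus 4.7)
The plan is to deduce the corollary by restricting the stratified Milnor tube fibration provided by Theorem~\ref{t:tube} to the complement of the discriminant, and then verifying that over this open set the fibration is actually smooth rather than merely stratified.

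First I would invoke Theorem~\ref{t:tube}: since $G$ is tame, there exist $\e_{0}>0$ and, for each $0<\e<\e_{0}$, some $0<\eta\ll\e$ such that the restriction
\[
G_| : B^{m}_{\e} \cap G^{-1}( B^{p}_\eta \m \{ 0\} ) \to B^{p}_\eta \m \{ 0\}
\]
is a stratified locally trivial fibration with respect to the regular stratification $(\bW, \bS)$.  Next I would identify $B^{p}_{\eta} \m \Disc G$ as an open union of top strata of $\bS$: by Theorem~\ref{main-new}(c) applied to the tame map $G$, one has $\Disc G = G(\Sing G)$, and by the construction of the regular stratification $\bS$ the discriminant is a union of strata of $\bS$ all of dimension strictly less than $p$. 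Consequently, $B^{p}_{\eta} \m \Disc G$ is open and subanalytic, namely the union of those $p$-dimensional strata of $\bS$ which lie in $\im G$. Restricting the stratified fibration above to the preimage of $B^{p}_{\eta} \m \Disc G$ then yields the candidate Milnor--Hamm fibration.

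The core of the argument is then to upgrade this restricted stratified fibration to an ordinary locally trivial fibration with nonsingular Milnor fibre. Fix a top stratum $S_{\beta}\s B^{p}_{\eta} \m \Disc G$ and a point $y\in S_{\beta}$. Since $y \notin G(\Sing G)$, one has $G^{-1}(y)\cap \Sing G = \emptyset$, so $G$ is a submersion in a neighbourhood of the entire fibre $G^{-1}(y)$. Combined with the tameness hypothesis \eqref{eq:main2}, which forces the sphere $S^{m-1}_{\e}$ to be transverse to $G^{-1}(y)$ for any small enough $\e$, this implies that $B^{m}_{\e}\cap G^{-1}(y)$ is a smooth $(m-p)$-manifold with boundary. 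In particular, only open (i.e.\ $m$-dimensional) strata of $\bW$ meet $G^{-1}(S_{\beta})\cap B^{m}_{\e}$, so the stratified local triviality collapses to ordinary $C^{\infty}$ local triviality over each connected component of $B^{p}_{\eta} \m \Disc G$.

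The last step is routine: the independence of this fibration from the choices of $\e$ and $\eta$, up to stratified (here, smooth) homeomorphism, is inherited directly from the corresponding property stated for the tube fibration in Theorem~\ref{t:tube}. I do not anticipate a genuine obstacle in carrying out this plan; the only point that needs to be in place is the equality $\Disc G = G(\Sing G)$, which has already been secured in Theorem~\ref{main-new}(c). Once that is used to guarantee that $B^{p}_{\eta} \m \Disc G$ is a union of $p$-dimensional strata of $\bS$, the corollary drops out of the construction used to prove Theorem~\ref{t:tube}.
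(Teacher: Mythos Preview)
Your proposal is correct and follows essentially the same route as the paper, which treats the corollary as an immediate consequence of the proof of Theorem~\ref{t:tube}: since $\Disc G = G(\Sing G)$ is a union of strata of $\bS$ (Theorem~\ref{main-new}(c)), the complement is a union of top-dimensional strata over which the stratified tube fibration restricts to a smooth one with nonsingular fibres. You have simply spelled out in detail what the paper leaves implicit in the short paragraph preceding the corollary.
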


Let us remark that one may have the Milnor-Hamm fibration  without the existence of the singular Milnor tube fibration. This is the case in Example \ref{ex:lessimage}.

\subsection{Relation with the Thom regularity, after \cite{ParTi, ART}}
We explain here the relation between condition \eqref{eq:main2} and the Thom regularity, a well-known notion which is  involved in the existence of local fibrations in the classical settings of ICIS or  $\Disc G \subset G^{-1}(0)$, see for instance \cite{Le, Sa, Ti1,  Ti3,  Ma, DRT,  dST1, CGS, Oka4, ParTi}, and more recently beyond these settings \cite{MS, ART, ART2}.
We recall\footnote{After \cite{GLPW, JM}.} that given some stratification of a neighbourhood of $0 \in \bR^{m}$,  a stratum $A$ is \textit{Thom regular} over a stratum $B\subset \bar A\m A$ at $x\in B$ relative to $G$ (or, equivalently,  that the pair $(A,B)$ satisfies the Thom (a$_G$)-regularity condition at $x$),  if the following condition holds: for any $\{x_n\}_{n\in \bN}\subset A$ such that  $x_n\to x$, if  $T_{x_n}(G_{|A})$ converges  to a limit $H$
in the appropriate Grassmann bundle,  then  $T_{x}(G_{|B})\subset H$. 

\begin{definition}\label{d:thom-general} \cite{ART} 
Let $G:(\bR^m, 0) \to (\bR^p,0)$, $m \geq p>1$,  be a non-constant analytic map germ. One says that 
$G$ is \textit{Thom regular at $G^{-1}(0)$} if there
exists a regular stratification $(\bW, \bS)$ such that $G^{-1}(0)$ is a union of strata of $\bW$, that $0$ is a point-stratum in $\bS$, and that the Thom (a$_{G}$)-regularity condition is satisfied at any stratum of $G^{-1}(0)\m \{ 0\}$.
\end{definition}

In the above definition,  one only needs that Thom regularity holds at the fibre over 0. This condition is enough to insure condition \eqref{eq:main2} and thus we derive the following statement from Theorem \ref{t:tube}:

\begin{corollary}\label{c:tube-gen}
	Let $G:(\bR^m, 0) \to (\bR^p,0)$, $m \geq p>1$, be a non-constant  analytic map germ. If $G$ is Thom regular at $G^{-1}(0)$, and if $\dim G^{-1}(0) >0$,  then $G$ is a tame NMG,  and has a singular Milnor tube fibration \eqref{eq:tube1}. 
	\fin
\end{corollary}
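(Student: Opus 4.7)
The plan is to reduce to the two preceding general results of this paper. By Theorem \ref{main-new}(a), tameness already guarantees that $G$ is a NMG, and by Theorem \ref{t:tube} it further delivers the singular Milnor tube fibration \eqref{eq:tube1}. So the only real work is to establish the implication \emph{``Thom regular at $G^{-1}(0)$ $\Rightarrow$ the tameness condition \eqref{eq:main2}''}; the hypothesis $\dim G^{-1}(0)>0$ excludes the classical ICIS situation and ensures that $G^{-1}(0)$ carries strata on which the Thom condition is nontrivial.

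I would prove this implication by contradiction. Suppose there exists $x_{0} \in \overline{M(G)\m G^{-1}(0)}\cap G^{-1}(0)$ with $x_{0}\ne 0$. By the finiteness of the stratification $\bW$ one may, after extraction, assume that all approaching points lie in a single stratum $A\not\subset G^{-1}(0)$, while $x_{0}$ lies in a stratum $B\subset G^{-1}(0)$ with $B\subset \overline A$. The curve selection lemma, applied to the semi-analytic set $M(G_{|A})\m G^{-1}(0)$, then furnishes a real analytic arc $\gamma:[0,\varepsilon)\to\bR^{m}$ with $\gamma(0)=x_{0}$, $\gamma((0,\varepsilon))\subset A$, and $\gamma(s)\in M(G_{|A})$ for every $s>0$.

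From the definition of the Milnor set, nontransversality of $\rho_{|A}$ and $G_{|A}$ at $\gamma(s)$ forces the fibre of $G_{|A}$ through $\gamma(s)$ to be tangent to the sphere $S_{\rho(\gamma(s))}$, hence $\gamma(s) \perp \ker d(G_{|A})(\gamma(s))$ in $\bR^{m}$. Because $G_{|A}$ has constant rank along the stratum $A$, and $\gamma$ is analytic, the kernel $\ker d(G_{|A})(\gamma(s))$ admits a Grassmannian limit $H$ as $s\to 0^{+}$. Passing to the limit gives $x_{0}\perp H$, and the Thom (a$_{G}$)-regularity of $(A,B)$ at $x_{0}$ yields $T_{x_{0}}B\subset H$, whence $T_{x_{0}}B\perp x_{0}$.

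The hard part is to convert the infinitesimal orthogonality $T_{x_{0}}B\perp x_{0}$, with $x_{0}\ne 0$ and $0\in\overline B$, into the desired contradiction. The natural route is to combine the Whitney (b)-regularity of the pair $(\overline B,\{0\})$ at the origin, which is built into $\bW$ as a Whitney stratification, with a \L ojasiewicz-type estimate along $\gamma$ comparing $|G(\gamma(s))|$ with the radial displacement $\rho(\gamma(s))-|x_{0}|$: the orthogonality forces $|G(\gamma(s))|$ to decay at a rate incompatible with both estimates. This quantitative step, essentially the classical Thom--Mather argument adapted to the stratified non-ICIS setting, is the main technical obstacle I would need to handle carefully.
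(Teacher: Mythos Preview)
Your overall reduction is exactly the one the paper intends: the sentence preceding the corollary says that Thom regularity at $G^{-1}(0)$ ``is enough to insure condition \eqref{eq:main2}'', after which Theorem~\ref{main-new} yields NMG and Theorem~\ref{t:tube} yields the singular Milnor tube fibration. Your proof of the implication ``Thom regular $\Rightarrow$ tame'' is also correct up to, and including, the conclusion $T_{x_{0}}B\perp x_{0}$.

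Where you go astray is in the last paragraph. The step you call ``the hard part'' is in fact immediate, and the route you sketch via a \L ojasiewicz-type estimate comparing $|G(\gamma(s))|$ with $\rho(\gamma(s))-|x_{0}|$ is neither needed nor obviously workable. The orthogonality $T_{x_{0}}B\subset x_{0}^{\perp}$ says precisely that the sphere $S^{m-1}_{|x_{0}|}$ is \emph{not} transversal to the stratum $B$ at $x_{0}$; equivalently, $x_{0}$ is a critical point of $\rho_{|B}$. But the semi-analytic Whitney stratification $\bW$ has the standard local conical property (used verbatim in the proof of Theorem~\ref{t:tube}): for all small enough $\e$ the sphere $S^{m-1}_{\e}$ is transversal to every stratum of $\bW$. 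Since the germ condition \eqref{eq:main2} lets you take $x_{0}$ arbitrarily close to $0$, this is already the contradiction. If you want to see it via the curve selection lemma directly: an analytic arc $\delta$ in $B$ with $\delta(0)=0$ and each $\delta(s)$ a critical point of $\rho_{|B}$ satisfies $\delta'(s)\perp\delta(s)$, hence $|\delta(s)|$ is constant, contradicting $\delta(0)=0$.

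A minor remark: your explanation of the hypothesis $\dim G^{-1}(0)>0$ is not quite to the point. When $G^{-1}(0)=\{0\}$ the tameness condition \eqref{eq:main2} is vacuous and the Thom condition at the point-stratum $\{0\}$ is empty, so nothing in your argument uses positive dimension; the hypothesis is there to keep the stratified tube fibration statement non-degenerate rather than to make the Thom condition ``nontrivial''.
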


\subsection{Thom regularity of map germs $f\bar g$}


One has shown in \cite[Proof of Theorem 3.1]{ParTi} that if the map germ $(f,g)$ is Thom regular then the map germ $f\bar{g}$  is Thom regular too.  For instance this is the case when $(f,g)$ defines an ICIS.
Combining this  with Corollary \ref{c:tube-gen} yields the following improved formulation of 
 \cite[Theorem 4.3]{ART}:
 
\begin{corollary}\label{c3} \label{t:mainfbarg}
 Let $f, g : (\mathbb{C}^n,0) \to (\mathbb{C},0)$, $n > 1$, be  non-constant holomorphic functions.

If  the map $(f,g)$ is Thom regular, then $f\bar{g}$ is a NMG, it is Thom regular and has a local singular Milnor fibration.
In particular this is the case if $(f,g)$ defines an ICIS.
\fin
\end{corollary}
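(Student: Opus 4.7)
The plan is to obtain the statement as an immediate combination of several preceding facts, with essentially no new computation. The core input is the implication proved in \cite[Proof of Theorem~3.1]{ParTi}: if the holomorphic map germ $(f,g):(\bC^n,0)\to(\bC^2,0)$ is Thom regular at $(f,g)^{-1}(0)$, then the mixed germ $f\bar g$ is Thom regular at $(f\bar g)^{-1}(0)$. I would quote this as a black box and not reprove it.

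Granted Thom regularity of $f\bar g$, I would next verify the dimension hypothesis needed to apply Corollary~\ref{c:tube-gen}. Since $f$ and $g$ are non-constant holomorphic functions on $(\bC^n,0)$ with $n>1$, each of $\{f=0\}$ and $\{g=0\}$ is a nonempty hypersurface germ, hence of dimension $n-1\ge 1$. Because $(f\bar g)^{-1}(0)=\{f=0\}\cup\{g=0\}$, we immediately get $\dim_0 (f\bar g)^{-1}(0)\ge n-1>0$. This is the only sanity check required before invoking the general criterion.

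With these two ingredients in place, I would apply Corollary~\ref{c:tube-gen} to $G:=f\bar g:(\bR^{2n},0)\to(\bR^2,0)$. That corollary directly yields all three conclusions at once: $f\bar g$ is tame, it is a NMG, and it admits a singular Milnor tube fibration in the sense of Definition~\ref{d:tube1}. The Thom-regularity assertion in the statement is of course just the input from \cite{ParTi} carried through, and by Corollary~\ref{c:tube-hamm} one also gets for free a Milnor--Hamm fibration over the complement of $\Disc(f\bar g)$.

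For the final sentence (the ICIS case), I would recall the classical fact that if $(f,g)$ defines an isolated complete intersection singularity, then $(f,g)$ is Thom regular at $(f,g)^{-1}(0)$; this is part of the standard ICIS theory used already in Proposition~\ref{p:dim} and covered in \cite{Lo,Lo2}. Thus the ICIS hypothesis feeds into the main hypothesis of the corollary, and the statement follows. I do not anticipate any real obstacle: the proof is a citation chain (\cite{ParTi} $\to$ Corollary~\ref{c:tube-gen} $\to$ Corollary~\ref{c:tube-hamm}), and the only nontrivial input is the transfer of Thom regularity from $(f,g)$ to $f\bar g$, which is already done in the literature.
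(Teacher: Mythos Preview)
Your proposal is correct and follows essentially the same route as the paper: quote \cite{ParTi} for the transfer of Thom regularity from $(f,g)$ to $f\bar g$, then apply Corollary~\ref{c:tube-gen}, with the ICIS case as a special instance. You are in fact slightly more explicit than the paper in verifying the hypothesis $\dim_0 (f\bar g)^{-1}(0)>0$ needed for Corollary~\ref{c:tube-gen}.
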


Corollary \ref{c3}  already provides classes of map germs  $f\bar{g}$ with positive dimensional discriminant  which are S-nice,  Thom regular at $(f\bar{g})^{-1}(0)$,
and thus have singular Milnor tube fibration.  Here is an example with this property:

\begin{example}\cite{ART}
Let $f,g :\mathbb{C}^2 \to \mathbb{C}$ given by $f(x,y) =xy+x^{2}$ and $g(x,y)= y^{2}$. One has $(f,g)^{-1}(0,0) = \{(0,0)\}$ and $\Sing(f,g) = \{y=0\}\cup \{y=-2x\}$, thus $(f,g)$ is obviously  Thom regular. However $\Disc (f,g) = \{(x^2,0)\,|\, x\in\mathbb{C} \}\cup \{(-x^2,4x^2)\,|\, x\in\mathbb{C} \}$ and therefore $f\bar g$ has  non-isolated critical value, namely $\Disc f\bar g$ is the real negative semi-axis. It then follows from Corollary \ref{c:tube-gen}  that $f\bar{g}$ is Thom regular, hence it  has a Milnor-Hamm fibration, and also a singular Milnor tube fibration.
\end{example}
\medskip

Let us finally remark that 
the singular Milnor tube fibration  may exist without the Thom regularity, see \cite{Ti4},  \cite[\S 5.3]{ACT}, \cite[Example 6.11]{ART}, \cite{Ri}.





\end{document}